\newcommand{\be}{\begin{equation}}
\newcommand{\ee}{\end{equation}}
\newcommand{\beano}{\begin{eqn*}} 
	\newcommand{\eeano}{\end{eqnarray*}}
\newcommand{\ba}{\begin{array}}
	\newcommand{\ea}{\end{array}}
\declaretheoremstyle[headfont=\normalfont]{normalhead}
\newtheorem{theorem}{Theorem}[section]
\newtheorem{theoremalph}{Theorem}[section]
\newtheorem{lemma}[theorem]{Lemma}
\newtheorem{corollary}[theorem]{Corollary}
\newtheorem{proposition}[theorem]{Proposition}
\numberwithin{equation}{section}
\def\remark{\refstepcounter{theorem}\paragraph{{\bfseries Remark~\thetheorem}}}
\begin{document}
\title{Automorphic word maps and Amit--Ashurst conjecture}

\author{Harish Kishnani}
\email{harishkishnani11@gmail.com}
\address{Indian Institute of Science Education and Research, Sector 81, Mohali 140306, India}

\author{Amit Kulshrestha}
\email{amitk@iisermohali.ac.in}
\address{Indian Institute of Science Education and Research, Sector 81, Mohali 140306, India}

\thanks{The second named author acknowledges the support from Prime Minister Research Fellowship. We are thankful to William Cocke whose survey talk introduced us to Amit conjecture, and to Josu Sangroniz for e-mail correspondence.}
\subjclass[2020]{20D15, 20F10}
\keywords{word maps, finite nilpotent groups, Amit--Ashurst conjecture}

\begin{abstract}
In this article, we address Amit--Ashurst conjecture on lower bounds of a probability distribution associated to a word on a finite nilpotent group. We obtain an extension of a result of \cite{CIT_2020} by providing improved bounds for the case of finite nilpotent groups of class $2$ for words in an arbitrary number of variables, and also settle the conjecture in some cases. We achieve this by obtaining words that are identically distributed on a group to a given word. In doing so, we also obtain an improvement of a result of \cite{IS_2017} using elementary techniques.
\end{abstract}
\maketitle

\section{Introduction}
Let $G$ be a finite group and $F_k$ be the free group in free variables $x_1, x_2, \dots, x_k$. Elements of $F_k$ are called \emph{words}. The evaluation of a word $w \in F_k$ on $k$-tuples $(g_1, g_2, \cdots, g_k) \in G^k$ induces a map $G^k \to G$. It is called the \emph{word map} on $G$ induced by $w$.
The notation $w$ is used for both, an element of a free group and the induced word map $w : G^k \to G$. The collection of word maps on $G$ forms a group $F_k(G)$ under pointwise operation. Thus, we have a homomorphism $F_k \to F_k(G)$ that maps $w$ to the corresponding word map on $G$.

The image of a word map $w$ is denoted by $w(G)$. For $g \in G$, the \emph{fiber} of $w$ at $g$ is the subset
$$w^{-1}(g) := \{(g_1, g_2, \dots, g_k) \in G^k: w(g_1, g_2, \dots, g_k) = g\}$$
of $G^k$. We denote $P_{w,G}(g) := |w^{-1}(g)|/|G|^k$. It is evident that
if $g \notin w(G)$, then $P_{w,G}(g) = 0$.
The map $P_{w,G} : G \to [0,1]$ given by $g \mapsto P_{w,G}(g)$ is a probability function on $G$ in the sense that $\displaystyle \sum_{g \in G} P_{w,G}(g) = 1$. 

The function $P_{w,G}$ has been studied in \cite{Levy_2011}, \cite{IS_2017}, \cite{CIT_2020}, \cite{Cocke-Nilpotent}, \cite{CCT_2022} and \cite{Cocke-Ho_19}. Some of these studies are inspired by the conjectures of Amit and Ashurst concerning a lower bound on $P_{w,G}$ over finite nilpotent groups. The conjecture of Amit states that for finite nilpotent groups $G$ and words $w \in F_k$, the number $P_{w,G}(1)$ is bounded below by $|G|^{-1}$. While the conjecture is still open, a stronger version of the conjecture was proposed by Ashurst in her thesis as a question.
Ashurst asks whether $|G|^{-1}$ is a lower bound for
$P_{w,G}(g)$ for every $g \in G$, where $G$ is a finte nilpotent group. Following \cite{CCT_2022}, we refer to the question of Ashurst as Amit--Ashurst conjecture.
We redirect the reader to \cite{CIT_2020} for details on Amit--Ashurst conjecture. Toward this conjecture, it was shown in \cite{CIT_2020},
that if $p$ is an odd prime and $G$ is a finite $p$-group of nilpotency class $2$, then for each $w \in F_k$, and $g \in G$, the bound $P_{w,G}(g) \geq |G|^{-2}$ holds. We obtain the following improvement of it.

\begin{theoremalph}(Theorem \ref{Improved_bound})
Let $G$ be a finite nilpotent group of class $2$ and $w \in F_k$.
Then for each $g \in G$, we have $P_{w,G}(g) \geq |G'|^{-1}|G|^{-1}$, where $G'$ denotes the derived subgroup of $G$. 
\end{theoremalph}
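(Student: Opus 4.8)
The plan is to reduce the statement, in three stages, to a counting problem on the abelianization that is controlled by the bilinearity of the commutator map. Since $G$ is finite nilpotent, $G = \prod_p G_p$ is the direct product of its Sylow subgroups, each of class $\le 2$; as the word map evaluates coordinatewise and $G' = \prod_p G_p'$, both $P_{w,G}(g)$ and the quantity $|G'|^{-1}|G|^{-1}$ are multiplicative across this decomposition, so it suffices to treat a finite $p$-group $G$ of class $\le 2$. Next I would exploit automorphic invariance of the distribution: for $\phi \in \Aut(F_k)$ the induced self-map of $G^k$ is a bijection (its inverse is induced by $\phi^{-1}$), hence $P_{\phi(w),G} = P_{w,G}$. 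Because $\Aut(F_k) \twoheadrightarrow GL_k(\mathbb Z)$ realises every $GL_k(\mathbb Z)$-transformation of the exponent-sum vector $(a_1,\dots,a_k)$ of $w$, I may replace $w$ by an automorphic image and assume $w = x_1^{d} v$, where $d = \gcd(a_1,\dots,a_k)$ and $v \in [F_k,F_k]$.

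Write $A = G/G'$ and $B = G' \le Z(G)$, both abelian. For a tuple $\mathbf g$ I fix lifts $h_i \in G$ of its image $\bar{\mathbf g} \in A^k$; since $B$ is central and $G$ has class $2$, writing $g_i = h_i z_i$ with $z_i \in B$ gives $w(\mathbf g) = w(\mathbf h)\, z_1^{\,d}$, because $v$ is a product of commutators and is therefore insensitive to central modifications, while only $x_1^{d}$ contributes central powers. Consequently, with $\bar{\mathbf g}$ fixed, the number of lifts satisfying $w(\mathbf g)=g$ equals $|B|^{k-1}$ times the number of $z_1 \in B$ with $z_1^{\,d} = w(\mathbf h)^{-1}g$; this is $|B|/|dB|$ exactly when the well-defined class $\Gamma(\bar{\mathbf g}) := w(\mathbf h)^{-1}g \bmod dB \in B/dB$ vanishes, and $0$ otherwise. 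Summing over $\bar{\mathbf g}$ yields
\[
P_{w,G}(g) = \frac{N}{|dB|\,|A|^{k}}, \qquad N = \#\{\bar{\mathbf g}\in A^{k} : \bar g_1^{\,d}=\bar g \text{ in } A,\ \Gamma(\bar{\mathbf g})=0\}.
\]
Assuming $g \in w(G)$ (otherwise $P_{w,G}(g)=0$ and the bound is trivial), a short computation shows that the claimed inequality follows from the clean estimate $N \ge |A[d]|\,|A|^{k-1}/|B/dB|$, since $|A[d]|\,|B|\ge 1$ and $|dB|\cdot|B/dB| = |B|$. Moreover $\bar g \in \img(\bar w) = dA$ forces the number of $\bar{\mathbf g}$ meeting the first condition to be exactly $|A[d]|\,|A|^{k-1}$, so the target estimate is precisely the statement that $\Gamma$ attains the value $0$ at least as often as the balanced average $|A[d]|\,|A|^{k-1}/|B/dB|$.

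The map $\Gamma$ is affine in each coordinate $\bar g_j$: with the remaining entries fixed, $\bar g_j \mapsto \Gamma(\bar{\mathbf g})$ is a coset of a homomorphism $A \to B/dB$, because $v$ is a product of commutators and the commutator pairing $A\times A \to B$ is bilinear. Since $g\in w(G)$ provides one solution, $\Gamma^{-1}(0)$ is nonempty, and the task is to promote this into a lower bound of the required size. I expect the main obstacle to lie exactly here: controlling how the values of the bilinear commutator word distribute modulo $dB$, equivalently bounding below the number of zeros of an alternating form with values in $B/dB$. This is where I would invoke the elementary counting estimate improving \cite{IS_2017}: the commutators $[g,h]$ generate $B=G'$, hence their classes generate $B/dB$, and an averaging (character) argument over the finite abelian group $B/dB$, combined with the per-coordinate linearity of $\Gamma$, forces enough equidistribution to yield $|\Gamma^{-1}(0)| \ge |A[d]|\,|A|^{k-1}/|B/dB|$. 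The degenerate case $d=0$, where $w\in[F_k,F_k]$ and necessarily $g\in G'$, must be treated separately, and there the same estimate is used to bound below the fiber of a pure commutator word over a prescribed element of $G'$.
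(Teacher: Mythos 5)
Your reduction to $p$-groups, the normalization $w = x_1^{d}v$ with $v \in [F_k,F_k]$, and the lift-counting formula $P_{w,G}(g) = N/(|dB|\,|A|^{k})$ are all correct, and they parallel the opening moves of the paper. The proof breaks at the one place you flag yourself: the estimate $|\Gamma^{-1}(0)| \geq |A[d]|\,|A|^{k-1}/|B/dB|$ is never established, and the appeal to ``an averaging (character) argument'' cannot establish it. Expanding $|\Gamma^{-1}(0)|$ as a character sum over $B/dB$, the trivial character contributes exactly your claimed main term, but $\Gamma$ is only affine in \emph{each} coordinate separately --- globally it is a quadratic (alternating bilinear) map --- so the nontrivial characters contribute Gauss-type sums that can be negative and of size comparable to the main term; averaging gives no pointwise lower bound on the number of zeros. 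Worse, note what your estimate would prove if true: fed into your own formula it yields $P_{w,G}(g) \geq |A[d]|/(|A||B|) \geq |G|^{-1}$ for every $g \in w(G)$, i.e.\ the full Amit--Ashurst conjecture for class-$2$ groups, which remains open and which neither this theorem (whose bound is only $|G'|^{-1}|G|^{-1}$) nor anything in the paper achieves. So the step you postpone is not a technical lemma; it is at least as hard as the open conjecture, and your scheme offers no mechanism for extracting even the weaker count $N \geq |dB|\,|A|^{k-1}/|B|^{2}$ that the stated bound actually requires from the mere nonemptiness of $\Gamma^{-1}(0)$.

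What the paper does instead --- and what your outline is missing --- is a normal form that \emph{linearizes} the problem before any counting. By Theorem \ref{word_as_chain}, $w$ is $F_k(G)$-automorphic to a linked chain $x_1^{t_0}[x_1,x_2]^{t_1}[x_2,x_3]^{t_2}\cdots[x_{k-1},x_k]^{t_{k-1}}$; since each commutator couples only \emph{consecutive} variables, freezing the variables of one parity at a witness tuple $(g_1,\dots,g_k)$ of $g$ leaves a genuine homomorphism in the remaining variables, namely $\varphi_{odd} : G' \times G^{\lceil k/2\rceil -1} \to G'$ and $\varphi_{even} : G^{\lfloor k/2\rfloor} \to G'$. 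Translating the witness by kernel elements produces at least $|\mathrm{ker}(\varphi_{odd})|\cdot|\mathrm{ker}(\varphi_{even})| \geq |G|^{\lceil k/2\rceil-1}\cdot |G|^{\lfloor k/2\rfloor}/|G'|$ points of $w^{-1}(g)$, which is precisely the bound $|G'|^{-1}|G|^{-1}$. In your language, this replaces the global zero-count of the quadratic map $\Gamma$ by kernel counts of its linear sections through one known zero --- a known solution plus translation, with no equidistribution input. Your $GL_k(\mathbb Z)$ reduction alone is not enough for this: you must also organize the commutator part of $w$ into the chain shape (as in Lemma \ref{commutator_products} and Lemma \ref{disjoint_commutators}), since a stray commutator $[x_i,x_j]$ with $i,j$ of the same parity would destroy the homomorphism property on which the whole count rests.
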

We achieve this using elementary methods. In the course of it, we prove the following theorem.

\begin{theoremalph}(Theorem \ref{word_as_chain})
Let $p$ be a prime and $\mathcal P$ be the set consisting of $0, 1$ and positive integral powers of $p$.
Let $G$ be a $p$-group of class $2$ and $w \in F_k$. Then there exist $t_0, t_1, t_2, \cdots, t_{k-1} \in \mathcal P$ such that $w$ is $F_k(G)$-automorphic to the word $x_1^{t_0} [x_1,x_2]^{t_1} [x_2,x_3]^{t_2} \cdots [x_{k-1},x_k]^{t_{k-1}}$.
\end{theoremalph}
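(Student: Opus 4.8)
The plan is to work with the class-$2$ normal form of $w$ and then perform a sequence of distribution-preserving substitutions that amount to a simultaneous Smith/symplectic reduction of the linear part and of the commutator pairing. First I would fix coordinates. Since $G$ has class $2$, the word map induced by $w$ depends only on the image of $w$ in the free class-$2$ group, so I may write
$$ w \equiv x_1^{a_1}\cdots x_k^{a_k}\prod_{1\le i<j\le k}[x_i,x_j]^{b_{ij}} \pmod{\gamma_3(F_k)}, $$
with $a_i,b_{ij}\in\mathbb Z$, using that commutators are central and bilinear. Set $R=\mathbb Z/\exp(G)$, a local ring since $G$ is a $p$-group; the exponents $a_i$ matter only modulo $\exp(G)$ and the $b_{ij}$ only modulo $\exp(G')$, and $\exp(G')\mid\exp(G)$ because $[x,y]^{\exp(G/G')}=[x^{\exp(G/G')},y]=1$ in class $2$. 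The substitutions I will use are of three kinds: unit power maps $x_i\mapsto x_i^{u}$ with $\gcd(u,\exp G)=1$, transvections $x_i\mapsto x_i x_j^{c}$, and more generally $GL$-substitutions $x_i\mapsto\prod_j x_j^{M_{ij}}$ with $M$ invertible over $R$ (restricted to a subset of the variables). Each induces a bijection of $G^k$, hence an automorphism of $F_k(G)$ that preserves $P_{w,G}$; this is the sense in which the resulting words are $F_k(G)$-automorphic to $w$.

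The reduction itself proceeds in two stages. In the first stage I reduce the linear part: the vector $(a_1,\dots,a_k)$ over the local ring $R$ can be carried by a $GL_k(R)$-substitution to $(t_0,0,\dots,0)$, where $t_0$ is the content of the vector and is therefore $0$, a unit (normalized to $1$), or a power of $p$, i.e. $t_0\in\mathcal P$. This concentrates all linear behaviour on $x_1$, at the cost of scrambling the $b_{ij}$ into some new alternating array, which is harmless. In the second stage I run a \emph{pin-and-peel} induction producing the chain. Maintaining the invariant that after pinning $[x_1,x_2]^{t_1},\dots,[x_{j-1},x_j]^{t_{j-1}}$ the word equals $x_1^{t_0}[x_1,x_2]^{t_1}\cdots[x_{j-1},x_j]^{t_{j-1}}\cdot W_j$ with $W_j$ a commutator word in $x_j,\dots,x_k$ only, I treat the forward interactions of $x_j$, namely $\prod_{s>j}[x_j,x_s]^{b_{js}}=[x_j,\prod_{s>j}x_s^{b_{js}}]$. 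Applying a $GL$-substitution to $x_{j+1},\dots,x_k$ that reduces the vector $(b_{j,j+1},\dots,b_{jk})$ to $(t_j,0,\dots,0)$ turns this into $[x_j,x_{j+1}]^{t_j}$ and leaves $x_j$ interacting with no later variable; again $t_j\in\mathcal P$ as the content over $R$. Iterating for $j=1,\dots,k-1$ yields exactly the claimed word.

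The point that needs the most care, and which forces the chain shape rather than a block-diagonal symplectic form, is the bookkeeping of the invariant. The crucial observation is that the substitution used at step $j$ acts only on $x_{j+1},\dots,x_k$ and fixes $x_1,\dots,x_j$ pointwise, so it preserves every already-pinned link $[x_{i-1},x_i]^{t_{i-1}}$ (both variables are fixed) and the term $x_1^{t_0}$; moreover, because pinning $x_{j-1}$ at the previous step removed all its interactions with variables beyond $x_j$, the variables $x_1,\dots,x_{j-1}$ have no commutator interaction with $x_{j+1},\dots,x_k$, so the substitution cannot resurrect them. Thus each peel affects only the block $\{x_j,\dots,x_k\}$, the chain grows one link at a time, and the backward links are never disturbed. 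The remaining ingredients, namely that a vector over $\mathbb Z/p^m$ reduces under $GL$ to its content and that this content lies in $\mathcal P$, and that the three families of substitutions are genuine distribution-preserving automorphisms of $F_k(G)$, are elementary and I expect them to be dispatched using the local-ring structure of $R$ and the bijectivity of the associated maps $G^k\to G^k$.
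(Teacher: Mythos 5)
Your proposal is correct and takes essentially the same route as the paper: the paper likewise first reduces $w$ to $x_1^t\prod_{m<n}[x_m,x_n]^{\beta_{m,n}}$ (via \cite[Lemma 2.3]{CockeHoChirality} and Lemma \ref{commutator_products}), then for $m=1,2,\dots$ kills all forward commutators of $x_m$ except a single link $[x_m,x_{m+1}]$ using valuation-ordered transvection substitutions that fix the earlier variables, and finally normalizes units by the power maps $x_i\mapsto x_i^{\delta_{i-1}}$. Your $GL$-over-$\mathbb{Z}/\exp(G)$ content reductions are precisely these permutations, transvections and unit power maps bundled together, and your pin-and-peel invariant is the paper's step-by-step bookkeeping in linear-algebraic packaging.
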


One of the byproducts of this result is an elementary proof of \cite[Corollary 4.3]{CIT_2020} which asserts that every finite group of nilpotency class $2$ is rational. 

In this paper, we also improve \cite[Proposition 2.3]{IS_2017}, to Theorem \ref{TheoremC}, again using elementary techniques.

For $t_0 \geq 0$, $1 \leq \ell \leq n \leq k$, and $s_i \in \mathbb N \cup \{0\}$, we denote
$$v_p(t_0,\ell; s_1,s_2,\cdots,s_{n-1}) := x_1^{t_0} \left( \prod_{i=1}^{\ell-1} [x_i,x_{i+1}]^{p^{s_{i}}} \right) \left( [x_\ell,x_{\ell+1}]^{p^{s_{\ell}}} [x_{\ell+2},x_{\ell+3}]^{p^{s_{\ell+2}}} \dots [x_{n-1},x_n]^{p^{s_{n-1}}} \right)$$
The word $v_p(t_0,\ell; s_1,s_2,\cdots,s_{n-1}) \in F_n$ has three parts. We refer to $x_1^{t_0}$ as the power part,
$\prod_{i=1}^{\ell-1} [x_i,x_{i+1}]^{p^{s_{i}}}$ as the
linked commutator part, and
$[x_\ell,x_{\ell+1}]^{p^{s_{\ell}}} [x_{\ell+2},x_{\ell+3}]^{p^{s_{\ell+2}}} \dots [x_{n-1},x_n]^{p^{s_{n-1}}}$ as the disjoint commutator part.

\begin{theoremalph}\label{TheoremC}(Theorem \ref{exhaustive_p})
Let $w \in F_k$ be a word and $G$ be a $p$-group of nilpotency class $2$. 
\begin{enumerate}
\item[(i).] If $w \notin [F_k, F_k]$ then either $P_{w,G}$ is a constant function or $w$ and $v(p^r, \ell; s_1,\cdots,s_{n-1})$ are $F_k(G)$-automorphic for some $r > 0$; 
$s_{1} < s_{2} < \dots < s_{\ell-1} < r$, and
$s_j < r$ for $j \geq \ell$.

\item[(ii).] If $w \in [F_k, F_k]$ then for some $s_1, s_2, \dots, s_{n-1}$, the words $w$ and $v(0, 1; s_1,s_2,\cdots,s_{n-1})$ are $F_k(G)$-automorphic.
\end{enumerate}
\end{theoremalph}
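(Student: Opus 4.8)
The plan is to take the chain normal form supplied by Theorem~\ref{word_as_chain} as the starting point and then transport it, through a sequence of $F_k(G)$-automorphic substitutions, into the prescribed shape. Thus I would first record that $w$ is $F_k(G)$-automorphic (written $\sim$) to
$$ w \ \sim\ x_1^{t_0}\,[x_1,x_2]^{t_1}[x_2,x_3]^{t_2}\cdots[x_{k-1},x_k]^{t_{k-1}}, \qquad t_i \in \mathcal P. $$
Writing $p^m$ for the exponent of $G'$ and recalling that $G'$ is central, each commutator exponent matters only modulo $p^m$, so every nonzero $t_i$ may be taken to be a power $p^{s_i}$, and the two cases of the theorem correspond exactly to $t_0=0$ and $t_0\neq 0$ (so $t_0=p^r$ with $r\geq 0$).

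The argument rests on three elementary moves, each induced by an automorphism of $F_k$, hence $F_k(G)$-automorphic and in particular distribution preserving. \emph{Central absorption}: if a surviving exponent satisfies $s_i\geq r$, then $[x_i,x_{i+1}]^{p^{s_i}}=\big([x_i,x_{i+1}]^{p^{s_i-r}}\big)^{p^r}$ is a $p^r$-th power of a central element, so the substitution $x_1\mapsto x_1[x_i,x_{i+1}]^{-p^{s_i-r}}$ — which fixes every commutator because the inserted factor is central — turns $x_1^{p^r}$ into $x_1^{p^r}[x_i,x_{i+1}]^{-p^{s_i}}$ and cancels the offending commutator; hence in case (i) I may assume every surviving $s_i<r$, which is the source of the strict inequalities $<r$. \emph{Merging and splitting}: the class-$2$ identity $[x_i,x_{i+1}]^{a}[x_{i+1},x_{i+2}]^{b}=[x_{i+1},\,x_i^{-a}x_{i+2}^{b}]$, together with invertible changes of variable $x_j\mapsto x_j x_{j'}^{\pm p^{c}}$, lets me fuse the two partners of a shared variable and then peel a linked pair off as a disjoint commutator. \emph{The constant case}: if $t_0=1$ then, for fixed $x_2,\dots,x_k$, the map $x_1\mapsto x_1[x_1,x_2]^{t_1}\cdot C$ (with $C$ the central, $x_1$-free product of the remaining commutators) is a bijection of $G$, since injectivity reduces via bilinearity to $u[u,x_2]^{t_1}=1\Rightarrow u=1$, and $u\in G'$ forces $[u,x_2]=1$, whence $u=1$; therefore every fibre has size $|G|^{k-1}$ and $P_{w,G}\equiv|G|^{-1}$ is constant, which settles the first alternative of case (i) and explains why the normal form is needed only for $r>0$.

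For case (ii) there is no power part pinning a variable, and $\prod_{i=1}^{n-1}[x_i,x_{i+1}]^{t_i}$ is precisely the word attached to a tridiagonal alternating matrix over $\mathbb Z/p^m$. Invertible substitutions act as congruence transformations, and an alternating form over $\mathbb Z/p^m$ decomposes as an orthogonal sum of blocks $\left(\begin{smallmatrix}0 & p^{s}\\ -p^{s} & 0\end{smallmatrix}\right)$; reading each block back as a commutator on a fresh pair of variables yields the purely disjoint word $v_p(0,1;s_1,\dots,s_{n-1})$. Concretely this is the repeated use of the second move: starting from the free tail variable $x_n$, each segment $[x_{j-1},x_j]^{p^{s_{j-1}}}[x_j,x_{j+1}]^{p^{s_j}}$ is split off as a disjoint commutator, and the absence of a pinned variable means the splitting succeeds regardless of how the two exponents compare.

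The heart of the proof, and the step I expect to be the main obstacle, is case (i) with $r>0$. Here $x_1^{p^r}$ pins $x_1$, so the tail-to-head decoupling no longer runs unobstructed: when I split the last link off as a disjoint commutator, the class-$2$ power identities $(ab)^{p^{s}}=a^{p^s}b^{p^s}[b,a]^{\binom{p^{s}}{2}}$ throw up central correction commutators (``fill-in'') whose $p$-valuations must be tracked and either reabsorbed into $x_1^{p^r}$ or folded back into lower links. I would organise this as an induction on the number of links: a link whose exponent does not exceed that of its inward neighbour can be decoupled into the disjoint part, whereas a link whose exponent is strictly larger remains attached to $x_1$, so the stable outcome is a linked head $[x_1,x_2]^{p^{s_1}}\cdots[x_{\ell-1},x_\ell]^{p^{s_{\ell-1}}}$ with $s_1<\dots<s_{\ell-1}<r$ followed by a disjoint tail with every $s_j<r$, that is, the word $v_p(p^r,\ell;s_1,\dots,s_{n-1})$. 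Checking that the correction terms can always be cleared — comfortably for odd $p$, where $\binom{p^{s}}{2}$ is divisible by $p^{s}$, and with extra care at $p=2$ — and that the induction genuinely terminates in the strictly increasing pattern is the technical crux; everything else is the bookkeeping of the three elementary moves.
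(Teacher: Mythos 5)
Your overall skeleton matches the paper's: start from the normal form of Theorem \ref{word_as_chain}, use the central substitution $x_1 \mapsto x_1[x_i,x_{i+1}]^{\mp p^{s_i-r}}$ to absorb any link with $s_i \geq r$ into the power part (this is verbatim the paper's Case 1), and decouple the remaining chain by substitutions $x_a \mapsto x_a x_b^{\pm p^c}$. Two of your ingredients genuinely differ from the paper and are correct: your direct bijection argument for the constant case (the paper instead absorbs \emph{all} commutators into $x_1$ when $r=0$, concluding $w \sim x_1$), and your treatment of case (ii) via the classification of alternating forms over $\mathbb{Z}/p^m\mathbb{Z}$ into scaled hyperbolic blocks, where the paper runs the hand-rolled induction of Lemma \ref{disjoint_commutators}. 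However, there is a genuine gap at what you yourself identify as the crux: for case (i) with $r>0$ you only describe a plan (``I would organise this as an induction \dots checking that the correction terms can always be cleared \dots is the technical crux''), and you explicitly leave unresolved whether the binomial correction terms $[b,a]^{\binom{p^s}{2}}$ can be cleared at $p=2$. Since the theorem is asserted for all primes, the main step of your proof is not actually carried out.

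Moreover, the difficulty you flag is self-inflicted rather than intrinsic: the corrections coming from $(ab)^{p^s}=a^{p^s}b^{p^s}[b,a]^{\binom{p^s}{2}}$ arise only if a non-central element is substituted into the pinned variable $x_1$, and the paper's proof never does this. Every move there is either $x_1 \mapsto x_1 z$ with $z$ a central commutator word, for which $(x_1z)^{p^r}=x_1^{p^r}z^{p^r}$ holds \emph{exactly} and all commutators involving $x_1$ are unchanged, or a substitution $x_a \mapsto x_a x_b^{c}$ with $a \geq 2$, where $x_a$ occurs only inside commutators, so only the bilinear expansion $[x_ax_b^c,y]=[x_a,y][x_b,y]^c$ is needed. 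The single move in Lemma \ref{disjoint_commutators} that substitutes into $x_1$ (the $\ell=1$ subcase, where the chain exponents are strictly increasing from the head) corresponds precisely to the terminal pattern $s_1<s_2<\dots<s_{\ell-1}<r$ that case (i) is aiming for, so it is never invoked in the presence of the power part: violations of the increasing pattern are repaired by substituting into tail variables, and exponents $\geq r$ are removed by central absorption. If you reorganize your induction so that $x_1$ is only ever multiplied by central elements, your fill-in terms never appear, the $p=2$ caveat evaporates, and your sketch completes into the paper's proof.
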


We make use of these theorems for settling the Amit--Ashurst conjecture in specific cases (Corollary \ref{condition_on_G-prime}, Corollary \ref{extraspecial-p_groups}, Corollary \ref{power_word}, Corollary \ref{surjective_word}). Finally, we conclude the article by recording that a word $w \in F_k$ is surjective on a finite nilpotent group $G$ of class $2$ if and only if $w$ is $F_k(G)$-automorphic to a primitive word. These conditions are equivalent to the condition that $P_{w,G}$ is a uniform probability distribution. 
 
 \section{Automorphisms of word maps}
In this section, we obtain some results which are to be used later in this article. These results concern identifying suitable representatives of words up to $F_k(G)$-automorphism.

Two words $w_1$ and $w_2$ are said to be \emph{identically distributed on} $G$ if $P_{w_1,G} = P_{w_2,G}$, as set theoretic functions on $G$. We do not impose identically distributed words to have the same number of variables. For instance, $w$ and $x_1$ are identically distributed if and only if $P_{w,G}$ is a constant function.

Let $Aut(F_k)$ denote the group of automorphisms of $F_k$ and $\varphi \in Aut(F_k)$. Then $\varphi$ is said to be an \emph{elementary Nielsen transformation} if one of the following holds.
\begin{enumerate}
\item[(i).] $\varphi$ swaps $x_1$ and $x_2$, and fixes all other $x_i$'s.
\item[(ii).] $\varphi$ cyclically permutes $x_1, x_2, \dots , x_k$ to $x_2, \dots , x_k, x_1$.
\item[(iii).] $\varphi$ maps $x_1$ to ${x_1}^{-1}$.
\item[(iv).] $\varphi$ maps $x_1$ to $x_1x_2$.
\end{enumerate}

It is evident that if $\varphi$ is an elementary Nielsen transformation then $w$ and $\varphi(w)$ are identically distributed. Since elementary Nielsen transformations generate $Aut(F_k)$ (see \cite{Nielsen-24}), we have the following.

\begin{proposition}\label{aut_eq}
Let $G$ be a finite group and $\varphi \in Aut(F_k)$. Then for every $w \in F_k$, the words $w$ and $\varphi(w)$ are identically distributed on $G$.
\end{proposition}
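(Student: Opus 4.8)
The plan is to reduce the statement to the single observation that every automorphism of $F_k$ induces a \emph{bijection} of $G^k$ which carries fibers of $w$ onto fibers of $\varphi(w)$ of equal cardinality. Concretely, writing $\varphi(x_i) = u_i \in F_k$ and denoting by $u_i(\mathbf{g})$ the evaluation of the word $u_i$ at $\mathbf{g} = (g_1, \dots, g_k) \in G^k$, I would define $\Phi_\varphi : G^k \to G^k$ by $\Phi_\varphi(\mathbf{g}) = (u_1(\mathbf{g}), \dots, u_k(\mathbf{g}))$. Since applying the homomorphism $\varphi$ to $w$ amounts to substituting $u_i$ for $x_i$, evaluating at $\mathbf{g}$ yields the intertwining identity $\varphi(w)(\mathbf{g}) = w(\Phi_\varphi(\mathbf{g}))$, that is, $\varphi(w) = w \circ \Phi_\varphi$ as maps $G^k \to G$.

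The core of the argument is to show that $\Phi_\varphi$ is a bijection. For this I would verify functoriality: for $\varphi, \psi \in \Aut(F_k)$ one has $\Phi_{\psi \circ \varphi} = \Phi_\varphi \circ \Phi_\psi$, which follows directly from the fact that the generators of $\psi \circ \varphi$ are obtained by substituting the generators of $\psi$ into those of $\varphi$. Combined with $\Phi_{\mathrm{id}} = \mathrm{id}_{G^k}$, applying this to $\psi = \varphi^{-1}$ shows that $\Phi_{\varphi^{-1}}$ is a two-sided inverse of $\Phi_\varphi$, so $\Phi_\varphi$ is a bijection. The conclusion is then immediate: for each $g \in G$ the identity $\varphi(w) = w \circ \Phi_\varphi$ gives $\varphi(w)^{-1}(g) = \Phi_\varphi^{-1}(w^{-1}(g))$, whence $|\varphi(w)^{-1}(g)| = |w^{-1}(g)|$ and therefore $P_{\varphi(w), G}(g) = P_{w, G}(g)$ for every $g$.

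The only delicate point is keeping the composition order straight when establishing functoriality (the assignment $\varphi \mapsto \Phi_\varphi$ is an anti-homomorphism into the monoid of self-maps of $G^k$), but since only bijectivity is needed the order is immaterial to the conclusion. This argument also discharges the remark preceding the statement: it explains precisely why each elementary Nielsen transformation is distribution-preserving, the only mildly nonobvious case being the substitution $x_1 \mapsto x_1 x_2$, for which $\Phi_\varphi(\mathbf{g}) = (g_1 g_2, g_2, \dots, g_k)$ is visibly a bijection of $G^k$. Alternatively, one could follow the route indicated in the text: check the four elementary transformations by hand and then invoke the fact that they generate $\Aut(F_k)$, using that ``identically distributed on $G$'' is an equivalence relation and chaining the elementary steps $w, \varphi_1(w), \varphi_2\varphi_1(w), \dots$ along a factorization $\varphi = \varphi_n \cdots \varphi_1$. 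I regard the direct bijection argument as the cleaner of the two, since it avoids any case analysis and makes transparent why the result holds for all finite $G$ at once.
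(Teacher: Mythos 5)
Your proof is correct, but it takes a genuinely different route from the paper's. The paper's argument is exactly the alternative you sketch at the end: it declares it ``evident'' that each of the four elementary Nielsen transformations preserves $P_{w,G}$ and then invokes Nielsen's theorem (cited as \cite{Nielsen-24}) that these transformations generate $\Aut(F_k)$, chaining along a factorization of $\varphi$. Your direct argument replaces the appeal to that nontrivial generation theorem with the observation that $\varphi$ induces a bijection $\Phi_\varphi$ of $G^k$ satisfying $\varphi(w) = w \circ \Phi_\varphi$; your verification of the intertwining identity and of the contravariant functoriality $\Phi_{\psi\circ\varphi} = \Phi_\varphi \circ \Phi_\psi$ is sound, and as you note the anti-homomorphism order is harmless since only bijectivity of $\Phi_\varphi$ is used. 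A slick repackaging of your construction: under the canonical identification $G^k \cong \mathrm{Hom}(F_k, G)$ sending $\mathbf{g}$ to the homomorphism $x_i \mapsto g_i$, your $\Phi_\varphi$ is precisely precomposition $\rho \mapsto \rho \circ \varphi$, which makes both the bijectivity and the identity $\varphi(w)^{-1}(g) = \Phi_\varphi^{-1}\bigl(w^{-1}(g)\bigr)$ immediate. As for what each approach buys: the paper's proof is two lines modulo the cited theorem and matches the paper's working style, since all of its subsequent arguments manipulate words by explicit Nielsen-type substitutions; yours is self-contained, eliminates the unverified ``evident'' case check, and proves something slightly stronger and more transparent, namely that each fiber of $\varphi(w)$ is carried onto the corresponding fiber of $w$ by a single explicit bijection of $G^k$ --- an argument that applies verbatim to any endomorphism of $F_k$ inducing a bijection of $\mathrm{Hom}(F_k,G)$, not merely to automorphisms.
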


\remark \label{F_k(G)-automorphic} Let $G$ be a group and $w_1, w_2 \in F_k$. Asking $w_1$ and $w_2$ to be automorphic in $F_k$ in order to establish that these words are identically distributed on $G$ is bit too demanding. A milder condition to ask for is the automorphism of $w_1$ and $w_2$ as elements of the group $F_k(G)$. When this happens, we call
the two words to be $F_k(G)$-\emph{automorphic}.

\remark \label{class-2_eq} Let $G$ be a finite group of nilpotency class $2$ and $w \in F_k$. Then for every $w' \in [[F_k,F_k],F_k]$, the words $w$ and $ww'$ induce the same word map on $G$, where $ww'$ is the juxtaposition of the words $w$ and $w'$. This holds because for every $(g_1, g_2, \dots, g_k) \in G^k$ and $w' \in [[F_k,F_k],F_k]$, we have
$w'(g_1, g_2, \dots, g_k) = 1$. 

This remark will be used often in conjunction with Proposition \ref{aut_eq} throughout the paper.
We also record the following well-known identities for further use.

\begin{lemma}\label{powers-of-commutators}
Let $G$ be a nilpotent group of class $2$. For $g,h \in G$, denote $[g,h] := ghg^{-1}h^{-1}$. Let $n \in \mathbb Z$. Then,
\begin{enumerate}
\item[(i).] $[g,h]^n = [g^n,h] = [g,h^n]$.
\item[(ii).] $[g^ih^j,g^kh^l]=[g,h]^{il-jk}, \forall g,h\in G$.
\item[(iii).] $(gh)^n=g^n h^n [h,g]^{\frac{n(n-1)}{2}}$.
\end{enumerate}
\end{lemma}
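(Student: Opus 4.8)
The foundation for all three identities is the defining property of class $2$: the derived subgroup $[G,G]$ is contained in the center $Z(G)$, so every commutator $[g,h]$ commutes with every element of $G$. I would record this at the outset and use it freely.

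The plan is to deduce everything from the \emph{bilinearity} of the commutator. Starting from the general identity $[ab,c] = a[b,c]a^{-1}[a,c]$, centrality of $[b,c]$ gives $a[b,c]a^{-1} = [b,c]$, hence $[ab,c] = [a,c][b,c]$, and symmetrically $[a,bc] = [a,b][a,c]$. Thus the commutator is multiplicative in each argument. Part (i) then follows by induction: for $n \geq 0$, repeated use of $[g^{n}h, \cdot]$-type expansions yields $[g^n,h] = [g,h]^n$, while the case $n < 0$ reduces to $[g^{-1},h] = [g,h]^{-1}$, which comes from $1 = [gg^{-1},h] = [g,h]\,[g^{-1},h]$. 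The identity $[g,h^n] = [g,h]^n$ is symmetric.

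For part (ii), I would expand $[g^i h^j, g^k h^l]$ using bilinearity in both slots into a product of four commutators. Two of them, $[g^i,g^k]$ and $[h^j,h^l]$, vanish since each element commutes with its own powers. The surviving factors give $[g^i,h^l]\,[h^j,g^k] = [g,h]^{il}[h,g]^{jk}$ by part (i), and since $[h,g] = [g,h]^{-1}$ this collapses to $[g,h]^{il-jk}$. Part (iii) I would establish by induction on $n \geq 0$: multiplying $(gh)^n = g^n h^n [h,g]^{n(n-1)/2}$ on the right by $gh$, I use centrality to set the accumulated commutator aside and commute $g$ past $h^n$ via $h^n g = [h^n,g]\,g h^n = [h,g]^n\, g h^n$ (part (i)); collecting exponents converts $n(n-1)/2 + n$ into $n(n+1)/2$, closing the induction. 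The case $n < 0$ follows by inverting the positive formula and reconciling the resulting exponent using part (ii).

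The main obstacle, such as it is, is purely combinatorial: tracking the triangular exponent $n(n-1)/2$ accurately through the inductive step of part (iii) and verifying that the negative-exponent bookkeeping matches. No idea beyond the centrality of commutators in a class-$2$ group is needed.
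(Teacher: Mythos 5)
Your proof is correct in every detail: bilinearity of the commutator from centrality of $[G,G]$, part (i) by induction with the inverse case via $1=[gg^{-1},h]$, part (ii) by the four-factor expansion, and part (iii) by induction with the exponent check $n(n-1)/2+n=n(n+1)/2$ and the negative case via $[h^{-m},g^{-m}]=[h,g]^{m^2}$ all work exactly as you sketch. The paper itself offers no proof at all --- it merely records these as ``well-known identities'' --- so there is no authorial argument to compare against; your bilinearity-based derivation is the standard one and would serve as a complete proof.
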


The next lemma is an extension of  \cite[Lemma 3.5]{KKK_2022}. We introduce some notation before stating it. For $r$-tuples $I_1 = (i_{11}, i_{21}, \cdots, i_{r1}), I_2 = (i_{12}, i_{22}, \cdots, i_{r2}), \cdots, I_k = (i_{1k}, i_{2k}, \cdots, i_{rk}) \in \mathbb Z^r$ we denote,
\begin{align*}
|I_t| &:= i_{1t} + i_{2t} + \cdots + i_{rt}; \quad 1 \leq t \leq k \\
w_{I_1,I_2,\dots,I_k} &:= (x_1^{i_{11}}x_2^{i_{12}}\dots x_k^{i_{1k}}) \dots (x_1^{i_{r1}}x_2^{i_{r2}}\dots x_k^{i_{rk}}) \in F_k\\
\end{align*}

\begin{lemma}\label{commutator_products}
Let $I_1, I_2, \dots I_k \in \mathbb Z^r$, be such that $|I_1| = |I_2| = \cdots = |I_k| =0$. Let $G$ be a nilpotent group of class $2$. 
Then, there exist $\beta_{m,n} \in \mathbb{Z}$ such that $w_{I_1,I_2,\dots,I_k}$ and $\displaystyle \prod_{m=1}^{k-1} \prod_{n = m+1}^{k} [x_m,x_n]^{\beta_{m,n}}$ induce the same word map on $G$.
\end{lemma}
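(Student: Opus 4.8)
The plan is to reduce the whole statement to a single collection computation carried out modulo $[[F_k,F_k],F_k]$. Write $\equiv$ for congruence modulo $[[F_k,F_k],F_k]$; by Remark \ref{class-2_eq}, any two words that are $\equiv$ induce the same word map on $G$, so it suffices to establish the purely group-theoretic congruence $w_{I_1,\dots,I_k} \equiv \prod_{m<n}[x_m,x_n]^{\beta_{m,n}}$ for suitable integers $\beta_{m,n}$. The conceptual reason this should hold is that in the free class-$2$ nilpotent quotient $N := F_k/[[F_k,F_k],F_k]$ the derived subgroup $N'$ is central, while the abelianization $N/N'\cong \mathbb Z^k$ records, for each $t$, exactly the total exponent $|I_t|$ of $x_t$ occurring in $w_{I_1,\dots,I_k}$. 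Since $|I_1|=\cdots=|I_k|=0$ by hypothesis, the image of $w_{I_1,\dots,I_k}$ lies in $N'$, which is generated by the central commutators $[x_m,x_n]$; expressing that image in terms of these generators yields the exponents $\beta_{m,n}$.

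To make this explicit using only the identities already available, I would prove the following collection statement by induction on the number of blocks $r$: for \emph{arbitrary} tuples $I_1,\dots,I_k\in\mathbb Z^r$ (not assuming $|I_t|=0$),
\[
w_{I_1,\dots,I_k} \equiv x_1^{|I_1|}\,x_2^{|I_2|}\cdots x_k^{|I_k|}\prod_{m<n}[x_m,x_n]^{\beta_{m,n}} .
\]
The base case $r=1$ is immediate, since then $w_{I_1,\dots,I_k}=x_1^{i_{11}}\cdots x_k^{i_{1k}}$ and $|I_t|=i_{1t}$. For the inductive step I would split off the last block, apply the inductive hypothesis to the product of the first $r-1$ blocks, pull the resulting (central) commutator factor to the far right, and then merge the remaining power part $x_1^{|I_1'|}\cdots x_k^{|I_k'|}$ with the last block $x_1^{i_{r1}}\cdots x_k^{i_{rk}}$. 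The merging is done by repeatedly moving a lower-indexed power to the left past a higher-indexed one: for $s<t$, Lemma \ref{powers-of-commutators} gives $x_t^{a}x_s^{c}=x_s^{c}x_t^{a}[x_t,x_s]^{ac}$, where $[x_t,x_s]^{ac}$ is central. Each such swap leaves every variable's exponent unchanged and contributes one integer power of a commutator, so after all swaps the exponent of $x_t$ becomes $|I_t'|+i_{rt}=|I_t|$, and the accumulated commutators combine, again via Lemma \ref{powers-of-commutators}(i), into a single product $\prod_{m<n}[x_m,x_n]^{\beta_{m,n}}$.

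Specializing to $|I_1|=\cdots=|I_k|=0$ collapses every power factor to the identity, leaving $w_{I_1,\dots,I_k}\equiv \prod_{m<n}[x_m,x_n]^{\beta_{m,n}}$, which is exactly the claim after invoking Remark \ref{class-2_eq}. The only genuine work is the inductive merging step, and what keeps it painless is that we need merely the \emph{existence} of the integers $\beta_{m,n}$, not their values: by Lemma \ref{powers-of-commutators} every commutator produced during collection is already an integer power of some $[x_m,x_n]$ with $m<n$ and is central, so no delicate tracking of cancellations is required. The main obstacle to watch is therefore purely bookkeeping, namely justifying that commutator factors and power factors may be commuted past one another freely; this is guaranteed by the centrality of commutators in a class-$2$ group. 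Finally, I would remark that the cleaner non-inductive alternative is to cite the standard fact that $N'$ is free abelian with basis $\{[x_m,x_n]:m<n\}$, whereupon membership of the image of $w_{I_1,\dots,I_k}$ in $N'$ forces the representation at once; the induction above may be read as an elementary verification of precisely this structural fact.
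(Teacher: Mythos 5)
Your proof is correct, but it takes a genuinely different route from the paper's. The paper inducts on the number of variables $k$: it uses the hypothesis $|I_k|=0$ to substitute $i_{rk}=-(i_{1k}+\cdots+i_{(r-1)k})$, repeatedly pushes the powers of $x_k$ into central commutators $[x_m,x_k]$, and thereby reduces to a word $w_r\in F_{k-1}$ satisfying the same zero-sum hypothesis, with the base case $k=2$ outsourced to \cite[Lemma 3.5]{KKK_2022}. You instead induct on the number of blocks $r$ and prove a strictly stronger collection statement --- the normal form $w_{I_1,\dots,I_k}\equiv x_1^{|I_1|}\cdots x_k^{|I_k|}\prod_{m<n}[x_m,x_n]^{\beta_{m,n}}$ modulo $[[F_k,F_k],F_k]$, valid with \emph{no} hypothesis on the $|I_t|$ --- and then specialize to the zero-sum case. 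Your swap identity $x_t^{a}x_s^{c}=x_s^{c}x_t^{a}[x_t,x_s]^{ac}$ follows from Lemma \ref{powers-of-commutators} together with centrality of commutators, and the passage from congruence modulo $[[F_k,F_k],F_k]$ to equality of word maps is exactly Remark \ref{class-2_eq}, so every step is justified. What your approach buys: it is self-contained (no external citation for a base case), it makes transparent the structural reason the lemma holds (the evaluation factors through the free class-$2$ quotient $N$, where $w$ lands in $N'=\langle [x_m,x_n]\rangle$ because its abelianized image vanishes), and the stronger normal form would in fact also subsume the reduction to $x_1^t c$ via \cite[Lemma 2.3]{CockeHoChirality} used at the start of Theorem \ref{word_as_chain}. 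What the paper's approach buys is a shorter computation tailored to the zero-sum case, at the cost of leaning on the cited lemma for $k=2$.
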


\begin{proof}
Let $G$ be a nilpotent group of class $2$. We use induction on $k$ to prove this lemma. The case $k=2$ follows from \cite[Lemma 3.5]{KKK_2022}. Let us assume the lemma for words in $F_{k-1}$. In
$$ w_{I_1,I_2,\dots,I_k}=(x_1^{i_{11}}x_2^{i_{12}} \cdots x_k^{i_{1k}})(x_1^{i_{21}}x_2^{i_{22}} \cdots x_k^{i_{2k}}) \cdots (x_1^{i_{r1}}x_2^{i_{r2}} \cdots x_k^{i_{rk}}) \in F_k,$$
we substitute $i_{rk}=-(i_{1k}+i_{2k}+\cdots +i_{(r-1)k})$ so as to obtain the following equalities of word maps on $G$.
\begin{align*}
w_{I_1,I_2,\dots,I_k} &=(x_1^{i_{11}}x_2^{i_{12}} \cdots x_k^{i_{1k}}) \cdots (x_1^{i_{(r-1)1}} \cdots x_k^{i_{(r-1)k}})(x_1^{i_{r1}} \cdots x_k^{-(i_{1k}+i_{2k}+\cdots +i_{(r-1)k})}) \\
 &= (x_1^{i_{11}}x_2^{i_{12}} \cdots x_k^{i_{1k}}) \cdots (x_1^{i_{(r-1)1}} \cdots x_k^{i_{(r-1)k}})(x_1^{i_{r1}} \cdots x_k^{-i_{(r-1)k}}x_k^{-(i_{1k}+i_{2k}+\cdots +i_{(r-2)k})}) \\ 
 &= (x_1^{i_{11}}x_2^{i_{12}} \cdots x_k^{i_{1k}}) \cdots (x_1^{i_{(r-1)1}} \cdots x_k^{i_{(r-1)k}})w' x_k^{-i_{(r-1)k}} {(w')}^{-1}w' x_k^{-(i_{1k}+i_{2k}+\cdots +i_{(r-2)k})},
\end{align*}
where $w'=x_1^{i_{r1}}x_2^{i_{r2}} \cdots x_{k-1}^{i_{r(k-1)}} \in F_{k-1}$.
Thus, as word maps on $G$
\begin{align*}
w_{I_1,I_2,\dots,I_k} &= (x_1^{i_{11}}x_2^{i_{12}} \cdots x_k^{i_{1k}}) \cdots (x_1^{i_{(r-1)1}} \cdots x_{k-1}^{i_{(r-1)(k-1)}}) [x_k^{i_{(r-1)k}}, w']w' x_k^{-(i_{1k}+i_{2k}+\cdots +i_{(r-2)k})}.
\end{align*}
By Remark \ref{class-2_eq}, 
\begin{align*}
w_{I_1,I_2,\dots,I_k} &= (x_1^{i_{11}}x_2^{i_{12}} \cdots x_k^{i_{1k}}) \cdots (x_1^{i_{(r-1)1}} \cdots x_{k-1}^{i_{(r-1)(k-1)}}) w' x_k^{-(i_{1k}+i_{2k}+\cdots + i_{(r-2)k})}[x_k^{i_{(r-1)k}}, w'].
\end{align*}
We now use Lemma \ref{powers-of-commutators} and obtain
\begin{align*}
w_{I_1,I_2,\dots,I_k} &= w_1 \prod_{m=1}^{k-1} [x_m,x_k]^{-i_{rm}i_{(r-1)k}}.
\end{align*}
where
$w_1 := (x_1^{i_{11}}x_2^{i_{12}} \cdots x_k^{i_{1k}}) \cdots (x_1^{i_{(r-1)1}} \cdots x_{k-1}^{i_{(r-1){k-1}}}) w' x_k^{-(i_{1k}+i_{2k}+\cdots + i_{(r-2)k})}$.
Observe that $w_1 \in F_k$ satisfies the hypothesis of the lemma.
Thus, repeating steps as above and thereby pushing the powers of $x_k$ in the commutator, we eventually have a word $w_{r} \in F_{k-1}$ 
such that
\begin{align*}
w_{I_1,I_2,\dots,I_k} &= w_r \prod_{m=1}^{k-1} [x_m,x_k]^{\beta_{m,k}},
\end{align*}
as word maps on $G$, for some $\beta_{m,k} \in \mathbb{Z}$. Observe that $w_r$ satisfies the hypothesis of the Lemma in $k-1$ free variables, hence we are done.
\end{proof}

The next lemma also concerns words that are products of commutators.

\begin{lemma}\label{disjoint_commutators}
Let $n \leq k$ and $\alpha \in \mathbb N$. Let $w =[x_1,x_2]^{\alpha^{s_1}} [x_2,x_3]^{\alpha^{s_2}} \cdots [x_{n-1},x_n]^{\alpha^{s_{n-1}}}$ be a word in $F_k$. Let $n' = 2 \lfloor n/2 \rfloor$. Then, there exist nonnegative integers $t_1, t_3, \cdots, t_{n'-1}$ such that on any nilpotent group $G$ of nilpotency class $2$, the word $w$ is $F_k(G)$-automorphic to  
$[x_1,x_2]^{t_1} [x_3,x_4]^{t_3} \cdots [x_{n'-1},x_{n'}]^{t_{n'-1}}$, where $t_i \in \{0, 1, \alpha^{r_i}\}$ for some positive integers $r_i$.
\end{lemma}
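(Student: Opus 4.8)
The plan is to encode $w$ by the alternating integer matrix recording its commutator exponents and to carry out the reduction through the congruence action of $GL_k(\mathbb Z)$ on such matrices.

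First I would fix the encoding. On a class-$2$ group $G$, two products $\prod_{i<j}[x_i,x_j]^{b_{ij}}$ and $\prod_{i<j}[x_i,x_j]^{c_{ij}}$ induce the same word map whenever $b_{ij}=c_{ij}$ for all $i<j$, so I attach to $w$ the alternating matrix $A=(A_{ij})$ with $A_{i,i+1}=\alpha^{s_i}$ for $1\le i\le n-1$ and all other super-diagonal entries zero; thus $A$ is tridiagonal. For $N\in GL_k(\mathbb Z)$ the automorphism $\varphi_N$ of $F_k$ sending $x_i\mapsto\prod_j x_j^{N_{ij}}$ (such automorphisms exist and preserve the distribution by Proposition \ref{aut_eq}) satisfies, after expanding by Lemma \ref{powers-of-commutators}(i)--(ii) and discarding the class-$3$ part via Remark \ref{class-2_eq}, that $\varphi_N(w)$ is, as a word map on $G$, the commutator word attached to $N A N^{\mathsf T}$. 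Hence it suffices to transform $A$ by integral congruence $A\mapsto N A N^{\mathsf T}$ into a block-diagonal matrix whose nonzero part consists of $2\times2$ alternating blocks with upper entry $t$, each $t$ lying in $\{0,1,\alpha^{r_i}\}$.

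The reduction is an induction on $n$ that peels off one such block at a time. Choose an index $m$ with $s_m=\min_i s_i$, so that $\alpha^{s_m}$ divides every entry of $A$ and $A_{m,m+1}=\alpha^{s_m}$ may serve as a pivot. Set $e=\alpha^{s_{m-1}-s_m}$ and $f=\alpha^{s_{m+1}-s_m}$, which are integers precisely because $s_m$ is minimal. I would then apply the two elementary congruences which add $e$ times row and column $m+1$ to row and column $m-1$, and $f$ times row and column $m$ to row and column $m+2$: a direct computation shows these clear the neighbouring entries $A_{m-1,m}$ and $A_{m+1,m+2}$, leave rows and columns $m$ and $m+1$ supported only on the pivot (so that $\{m,m+1\}$ splits off as a $2\times2$ block with entry $\alpha^{s_m}$), create exactly one new entry $A_{m-1,m+2}=\alpha^{s_{m-1}+s_{m+1}-s_m}$, which is again a power of $\alpha$, and fix every other entry. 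Therefore the residual matrix, on the indices obtained by deleting $m$ and $m+1$, is again tridiagonal along the path $1,\dots,m-1,m+2,\dots,n$ with power-of-$\alpha$ entries, and the inductive hypothesis applies. When $m\in\{1,n-1\}$ only one neighbour is present and no new entry is created.

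After $\lfloor n/2\rfloor$ such steps the matrix is block-diagonal with $\lfloor n/2\rfloor$ blocks (one index is left over when $n$ is odd), each block-entry equal to $1$ or a power of $\alpha$, hence in $\{0,1,\alpha^{r_i}\}$; a permutation automorphism of $F_k$, available since $n\le k$, then relabels the blocks into the positions $(1,2),(3,4),\dots,(n'-1,n')$ with $n'=2\lfloor n/2\rfloor$, giving the asserted word. I expect the main obstacle to be the bookkeeping of the pivoting step: one must check that clearing the two neighbours neither recouples $x_m$ or $x_{m+1}$ to the remaining variables nor produces more than the single new link $A_{m-1,m+2}$, so that the residual genuinely stays a chain, and that the minimality of $s_m$ is exactly what keeps every coefficient an integral power of $\alpha$ throughout. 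The boundary cases $m=1$ and $m=n-1$ require separate, though easier, verification.
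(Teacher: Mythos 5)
Your proposal is correct and is essentially the paper's own argument in matrix clothing: the paper proves the lemma by the same induction, peeling off one disjoint pair at a local minimum of the exponent sequence via the two Nielsen substitutions $x_\ell \mapsto x_\ell x_{\ell+2}^{\alpha^{s_{\ell+1}-s_\ell}}$ and $x_{\ell+1} \mapsto x_{\ell+1} x_{\ell-1}^{\alpha^{s_{\ell-1}-s_\ell}}$, which are exactly your two elementary congruences, producing the same single new link with exponent $\alpha^{s_{\ell-1}+s_{\ell+1}-s_\ell}$. Your global-minimum pivot merely unifies the paper's boundary and interior cases (and note that, since only elementary matrices $I+eE_{ij}$ arise, each step is realized by an honest automorphism of $F_k$, so the loose claim about arbitrary $N \in GL_k(\mathbb{Z})$ is never actually needed).
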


\begin{proof}
We use induction on $n$ to prove this lemma. For $n=2$, the word $w$ is a power of a single commutator, and there is nothing to prove. So, we suppose that $n \geq 3$ and assume that the lemma holds for all $m \leq n-1$. 
Now, we consider the two cases.
\begin{enumerate}
\item[{\bfseries Case 1.}] $s_{n-1} \leq s_{n-2}$ \quad
Under the transformation $x_n \mapsto x_n ({x_{n-2}})^{\alpha^{s_{n-2}-s_{n-1}}}$, the word $w$ maps to
$[x_1,x_2]^{\alpha^{s_{1}}} [x_2,x_3]^{\alpha^{s_{2}}} \cdots [x_{n-3},x_{n-2}]^{\alpha^{s_{n-3}}} [x_{n-1},x_n]^{\alpha^{s_{n-1}}}$. The last two terms of this word are powers of disjoint commutators. Now, by induction, assuming the lemma for the word $[x_1,x_2]^{\alpha^{s_{1}}} [x_2,x_3]^{\alpha^{s_{2}}} \cdots [x_{n-3},x_{n-2}]^{\alpha^{s_{n-3}}}$, we are done. \\

\item[{\bfseries Case 2.}] $s_{n-1} > s_{n-2}$ \quad Let $\ell$ be the smallest index such that $s_{\ell} < \cdots s_{n-2}< s_{n-1}$. First, suppose that $\ell=1$.
The automorphic transformation $x_1 \mapsto x_1{x_{3}}^{\alpha^{s_{2}-s_{1}}}$ maps $w$ to the word
$[x_1,x_2]^{\alpha^{s_{1}}} [x_3,x_4]^{\alpha^{s_{3}}} [x_4,x_5]^{\alpha^{s_{4}}} \dots [x_{n-1},x_n]^{\alpha^{s_{n-1}}}$. The first two terms of this word are powers of disjoint commutators. Now, we appeal to the induction on the word $[x_3,x_4]^{\alpha^{s_{3}}} [x_4,x_5]^{\alpha^{s_{4}}} \dots [x_{n-1},x_n]^{\alpha^{s_{n-1}}}$ to conclude the lemma. \\

Next, suppose that $\ell \geq 2$. Then, we have $s_{\ell -1} \geq s_{\ell} < s_{\ell +1} < \cdots < s_{n-2} < s_{n-1}$. Under the automorphic transformation $x_\ell \mapsto x_\ell ({x_{\ell+2}})^{\alpha^{s_{\ell+1}-s_{\ell}}}$, the word $w$ maps to the word

$$[x_1,x_2]^{\alpha^{s_{1}}} \dots [x_{\ell-1},x_\ell]^{\alpha^{s_{\ell-1}}} [x_{\ell-1},x_{\ell+2}]^{\alpha^{s_{\ell-1}+(s_{\ell+1}-s_{\ell})}} [x_{\ell},x_{\ell+1}]^{\alpha^{s_{\ell}}} [x_{\ell+2},x_{\ell+3}]^{\alpha^{s_{\ell+2}}} \dots [x_{n-1},x_n]^{\alpha^{s_{n-1}}}.$$

\noindent On this word, we further apply the transformation $x_{\ell+1} \mapsto x_{\ell+1} ({x_{\ell-1}})^{\alpha^{s_{\ell-1}-s_{\ell}}}$ to obtain the following word which is $F_k(G)$-automorphic to $w$.

$$[x_1,x_2]^{\alpha^{s_{1}}} \dots [x_{\ell-2},x_{\ell-1}]^{\alpha^{s_{\ell-2}}} [x_{\ell-1},x_{\ell+2}]^{\alpha^{s_{\ell-1}+(s_{\ell+1}-s_{\ell})}} [x_{\ell},x_{\ell+1}]^{\alpha^{s_{\ell}}} [x_{\ell+2},x_{\ell+3}]^{\alpha^{s_{\ell+2}}} \dots [x_{n-1},x_n]^{\alpha^{s_{n-1}}}.$$

\noindent Since $G$ is a nilpotent group of class 2, this word is $F_k(G)$-automorphic to the following word obtained after permuting commutators.
$$[x_1,x_2]^{\alpha^{s_{1}}} \dots [x_{\ell-2},x_{\ell-1}]^{\alpha^{s_{\ell-2}}} [x_{\ell-1},x_{\ell+2}]^{\alpha^{s_{\ell-1}+(s_{\ell+1}-s_{\ell})}} [x_{\ell+2},x_{\ell+3}]^{\alpha^{s_{\ell+2}}} \dots [x_{n-1},x_n]^{\alpha^{s_{n-1}}}[x_{\ell},x_{\ell+1}]^{\alpha^{s_{\ell}}}.$$

\noindent Now, the proof follows by induction on the word
$$[x_1,x_2]^{\alpha^{s_{1}}} \dots [x_{\ell-2},x_{\ell-1}]^{\alpha^{s_{\ell-2}}} [x_{\ell-1},x_{\ell+2}]^{\alpha^{s_{\ell-1}+(s_{\ell+1}-s_{\ell})}} [x_{\ell+2},x_{\ell+3}]^{\alpha^{s_{\ell+2}}} \dots [x_{n-1},x_n]^{\alpha^{s_{n-1}}}.$$

\end{enumerate}
\end{proof}

\begin{theorem}\label{word_as_chain}
Let $p$ be a prime and $\mathcal P$ be the set consisting of $0, 1$ and positive integral powers of $p$.
Let $G$ be a $p$-group of class $2$ and $w \in F_k$. Then there exist $t_0, t_1, t_2, \cdots, t_{k-1} \in \mathcal P$ such that $w$ is $F_k(G)$-automorphic to the word $x_1^{t_0} [x_1,x_2]^{t_1} [x_2,x_3]^{t_2} \cdots [x_{k-1},x_k]^{t_{k-1}}$.
\end{theorem}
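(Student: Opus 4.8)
The plan is to put $w$ into a normal form and then strip off the commutator chain one link at a time from the left, using two families of maps. The first are automorphisms of $F_k$, legitimate by Proposition~\ref{aut_eq}. The second are the power substitutions $x_i\mapsto x_i^{u}$ with $\gcd(u,p)=1$, and the first point to settle is that these are $F_k(G)$-automorphisms. Since $G$ is a finite $p$-group, $F_k(G)$ is a subgroup of the group of all functions $G^k\to G$ under pointwise multiplication, hence a finite $p$-group of exponent dividing $\exp(G)$; thus $u$ is invertible modulo $\exp(G)$, and choosing $u'$ with $uu'\equiv1\pmod{\exp(G)}$ the substitutions $x_i\mapsto x_i^{u}$ and $x_i\mapsto x_i^{u'}$ descend to mutually inverse endomorphisms of $F_k(G)$. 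A substitution $x_i\mapsto x_i^{u}$ rescales a power $x_i^{a}$ to $x_i^{au}$ and a commutator power $[x_i,x_j]^{c}$ to $[x_i,x_j]^{cu}$, and will be the device that turns integer exponents into elements of $\mathcal P$.

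First I would reduce the power part. Letting $(a_1,\dots,a_k)$ be the exponent-sum vector of $w$, the word $w\,(x_1^{a_1}\cdots x_k^{a_k})^{-1}$ lies in $[F_k,F_k]$, so by Lemma~\ref{commutator_products} we have $w\equiv x_1^{a_1}\cdots x_k^{a_k}\prod_{m<n}[x_m,x_n]^{c_{mn}}$ as word maps on $G$. Applying an automorphism of $F_k$ acting on abelianizations as a suitable $A\in GL_k(\mathbb Z)$ sends $(a_1,\dots,a_k)$ to $(d,0,\dots,0)$ with $d=\gcd(a_1,\dots,a_k)$, and re-collecting the commutator part by Lemma~\ref{commutator_products} replaces $w$ by $x_1^{d}\prod_{m<n}[x_m,x_n]^{c'_{mn}}$. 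Writing $d=p^{s_0}m_0$ with $\gcd(m_0,p)=1$ and applying $x_1\mapsto x_1^{m_0^{-1}}$, the power becomes $x_1^{t_0}$ with $t_0=p^{s_0}\in\mathcal P$ (and $t_0=0$ when $w\in[F_k,F_k]$). From here on every automorphism I use fixes $x_1$, so this power part is never disturbed again.

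The core is an induction that peels off the links $[x_i,x_{i+1}]$ for $i=1,\dots,k-1$. Suppose that after the first $i-1$ stages the word reads $x_1^{t_0}[x_1,x_2]^{t_1}\cdots[x_{i-1},x_i]^{t_{i-1}}\cdot\prod_{i\le m<n}[x_m,x_n]^{c_{mn}}$ with $t_0,\dots,t_{i-1}\in\mathcal P$. Using $[x_i,x_n]^{c_{in}}=[x_i,x_n^{c_{in}}]$ and $[x_i,a][x_i,b]=[x_i,ab]$ (Lemma~\ref{powers-of-commutators} and class $2$), I gather all commutators containing $x_i$ into a single $[x_i,y_i]$ with $y_i=\prod_{n>i}x_n^{c_{in}}\in\langle x_{i+1},\dots,x_k\rangle$, the commutators among $x_{i+1},\dots,x_k$, call them $R_i$, being set aside. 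An automorphism $\psi_i$ of $\langle x_{i+1},\dots,x_k\rangle$ that fixes $x_1,\dots,x_i$ sends $y_i$ to $x_{i+1}^{g_i}c_i$ with $g_i=\gcd(c_{i,i+1},\dots,c_{ik})$ and $c_i\in[\langle x_{i+1},\dots,x_k\rangle,\langle x_{i+1},\dots,x_k\rangle]$. Then $[x_i,\psi_i(y_i)]=[x_i,x_{i+1}]^{g_i}[x_i,c_i]=[x_i,x_{i+1}]^{g_i}$ on $G$, because $[x_i,c_i]\in[[F_k,F_k],F_k]$ is trivial on $G$ by Remark~\ref{class-2_eq}. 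Writing $g_i=p^{s_i}m_i$ with $\gcd(m_i,p)=1$ and applying $x_{i+1}\mapsto x_{i+1}^{m_i^{-1}}$ turns the new link into $[x_i,x_{i+1}]^{t_i}$ with $t_i\in\mathcal P$.

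The step most prone to error, and the one I would check most carefully, is that no move at stage $i$ disturbs the prefix already fixed. Both $\psi_i$ and $x_{i+1}\mapsto x_{i+1}^{m_i^{-1}}$ fix $x_1,\dots,x_i$, hence fix $x_1^{t_0}$ and each earlier link $[x_{j},x_{j+1}]^{t_j}$ with $j<i$ (none of which involves $x_{i+1},\dots,x_k$); the only already-formed object they touch is the current link, and that is precisely what brings its exponent into $\mathcal P$. The leftover commutators $\psi_i(R_i)$ again live in $\langle x_{i+1},\dots,x_k\rangle$, so the induction proceeds with $x_i$ removed from play. After stage $k-1$ the commutator remainder is empty, and $w$ is $F_k(G)$-automorphic to $x_1^{t_0}[x_1,x_2]^{t_1}\cdots[x_{k-1},x_k]^{t_{k-1}}$ with every $t_i\in\mathcal P$, as required.
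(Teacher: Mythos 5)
Your proof is correct, and it shares the paper's overall skeleton --- reduce $w$ to a power part times generator commutators via Lemma~\ref{commutator_products} (where the paper quotes \cite[Lemma 2.3]{CockeHoChirality} for the power part, you realize $\gcd(a_1,\dots,a_k)$ by a matrix in $GL_k(\mathbb Z)$ lifted to $\Aut(F_k)$, which amounts to the same thing), then eliminate off-chain commutators one variable at a time --- but your execution of the two key steps is genuinely different and arguably cleaner. Where the paper kills the commutators $[x_i,x_n]$, $n\geq i+2$, one at a time by Nielsen moves of the form $x_{i+1}\mapsto x_{i+1}x_n^{e}$, after first permuting variables so the $p$-valuations satisfy $b_{i,i+1}\leq b_{i,i+2}\leq\cdots$ (the minimal-valuation term must absorb the others for the exponents $e$ to be integers), you gather everything into a single commutator $[x_i,y_i]$ and apply one automorphism of $\langle x_{i+1},\dots,x_k\rangle$ realizing the gcd on the abelianization, with the residue $[x_i,c_i]\in[[F_k,F_k],F_k]$ killed by Remark~\ref{class-2_eq}; this replaces the paper's permutation-and-valuation bookkeeping by a single Bezout step. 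Second, you strip the unit part of each exponent immediately, via $x_{i+1}\mapsto x_{i+1}^{m_i^{-1}}$ \emph{before} the link at $x_{i+1}$ is formed, whereas the paper defers all units to the end and must then solve the chained congruences $\delta_i\delta_{i-1}\equiv\epsilon_i$; your interleaving avoids that system entirely. Third, your justification that $x_i\mapsto x_i^{u}$ with $\gcd(u,p)=1$ is an $F_k(G)$-automorphism (inverse $u'$ chosen modulo $\exp(G)$, using that $F_k(G)$ embeds in the $p$-group of maps $G^k\to G$) is more careful than the paper's bare assertion, and choosing inverses mod $\exp(G)$ rather than mod $p$ --- the paper's transformation uses an inverse of $\alpha_{1,2}$ mod $p$, which strictly speaking only raises the $p$-valuation of the unwanted exponent rather than cancelling it exactly --- makes your cancellations exact where the paper's step needs this small repair or an iteration. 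The one point you should state explicitly rather than leave implicit: after applying $\psi_i$ and the power substitution, the remainder $\psi_i(R_i)$ is a product of commutators of \emph{words}, not of generators, so the inductive shape is restored only after re-invoking Lemma~\ref{commutator_products} on $\langle x_{i+1},\dots,x_k\rangle$, exactly as you already do for the power part; with that sentence added, the induction closes and the argument is complete.
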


\begin{proof}
By \cite[Lemma 2.3]{CockeHoChirality}, we assume that  $w = x_1^t c$ for some $t \in \mathbb Z$ and $c \in [F_k,F_k]$. Now, by Lemma \ref{commutator_products}, the word $w$ is $F_k(G)$-automorphic to a word of type
$$x_1^{t} \displaystyle \prod_{m=1}^{k-1} \prod_{n=m+1}^{k} [x_m,x_n]^{\beta_{m,n}},$$
for some $t,\beta_{m,n} \in \mathbb{Z}$. We bunch together the commutators involving $x_1$ to obtain
that $w$ is $F_k(G)$-automorphic to
$$v := x_1^{t} \displaystyle \prod_{n=2}^{k} [x_1,x_n]^{\beta_{1,n}} \prod_{m=2}^{k-1} \prod_{n=m+1}^{k} [x_m,x_n]^{{\beta_{m,n}}}.$$ We substitute $t = p^a \alpha$ and $\beta_{m,n} = p^{b_{m,n}}\alpha_{m,n}$, where  $\alpha$ and $\alpha_{m,n}$ are integers which are either $0$ or coprime to $p$. Since a permutation of variables is an automorphism of $F_k$, after a suitable permutation of $x_1, x_2, \cdots, x_k$,  we assume that $\alpha_{1,n} = 0$ whenever $n$ exceeds a threshold, say $u_1$. If required, we further permute these free variables so as to ensure that $b_{1,2} \leq b_{1,3} \leq \cdots \leq b_{1,u_1}$.  If $u_1\geq 3$ then under the automorphic transformation $x_2 \mapsto x_2x_3^{-{(\alpha_{1,2})}^{-1}\alpha_{1,3}p^{(b_{1,3}-b_{1,2)}}}$, the word $[x_1,x_2]^{{p^{b_{1,2}}}\alpha_{1,2}}[x_1,x_3]^{{p^{b_{1,3}}}\alpha_{1,3}}$ maps to the word
$[x_1,x_2]^{{p^{b_{1,2}}}\alpha_{1,2}}$. Here $\alpha_{1,2}^{-1} \in \mathbb Z$ is an inverse of $\alpha_{1,2}$ mod $p$. Thus, $v$ is $F_k(G)$-automorphic to a word of type
$$x_1^{p^a\alpha} \displaystyle [x_1,x_2]^{{p^{b_{1,2}}}\alpha_{1,2}}  \prod_{n=4}^{u_1} [x_1,x_n]^{p^{b_{1,n}}\alpha_{1,n}} \prod_{m=2}^{k-1} \prod_{n=m+1}^{k} [x_m,x_n]^{p^{b'_{m,n}}\alpha'_{m,n}},$$
where $b'_{m,n} , \alpha'_{m,n}$ are some integers. We notice that the commutator $[x_1, x_3]$ does not show up in this word. In fact, a repetition of  transformations akin to the one above, allows us to eliminate commutators of the form $[x_1,x_n]^{{p^{b_{1,n}}}\alpha_{1,n}}$, whenever $n \geq 3$, and thereby we construct a sequence of words which are $F_k(G)$-automorphic to $v$.

Now, we assume for $m=2$, that after suitable permutations, there exists $u_2 \in \mathbb{N}$ such that $b_{2,3} \leq b_{2,4} \leq \cdots \leq b_{2,u_2}$, and $b_{2,n}=0$ for all $n > u_2$. Following an argument as in the above paragraph, we eliminate commutators $[x_2,x_n]^{{p^{b_{2,n}}}\alpha_{2,n}}$ for $n \geq 4$ ,  and construct a word which is $F_k(G)$-automorphic to $v$.  We continue this elimination process for $m=3,4,\dots, k-2$ and obtain that $v$ is $F_k(G)$-automorphic to a word of the form
$w' = x_1^{p^{s_0}\gamma_0} [x_1,x_2]^{{p^{s_{1}}}\gamma_{1}} [x_2,x_3]^{{p^{s_{2}}}\gamma_{2}} \cdots [x_{k-1},x_k]^{{p^{s_{k-1}}}\gamma_{k-1}},$
where ${s_{i}} \in \mathbb{N}\cup \{0\}$, and ${\gamma_{i}}$ are either $0$ or coprime to $p$. 

Define, for $i \in \{0, 1, \cdots, k-1\}$,
$$t_i = 
\begin{cases}
0 \quad \text{if } \gamma_i = 0 \\
p^{s_i} \quad \text{if } \gamma_i \neq 0
\end{cases}
$$
and
$$\epsilon_i = 
\begin{cases}
1 \quad \text{if } \gamma_i = 0 \\
\gamma_i \quad \text{if } \gamma_i \neq 0
\end{cases}
$$
Let $w'' = x_1^{t_0} [x_1,x_2]^{t_1} [x_2,x_3]^{t_2} \cdots [x_{k-1},x_k]^{t_{k-1}}$. We show that $w'$ and $w''$ are $F_k(G)$-automorphic. Observe that for each $\gamma$ that is coprime to $p$, the map $x_i \mapsto x_i^{\gamma}$ is an $F_k(G)$- automorphism. We consider the automorphism $\psi: F_k(G) \mapsto F_k(G)$ that maps each $x_i$ to $x_i^{\delta_{i-1}}$, 
where $\delta_0 = \epsilon_0$, and $\delta_i$ for $i \geq 1$ are determined by the congruences $\delta_i{\delta_{i-1}} \equiv \epsilon_{i} \mod p$. Then, $\psi(w'') = w'$.
Since $w$ is $F_k(G)$-automorphic to $v$, which is $F_k(G)$-automorphic to $w'$, and $w'$ is $F_k(G)$-automorphic to $w''$, the words $w$ and $w''$ are $F_k(G)$-automorphic. This completes the proof.
\end{proof}




A consequence of Theorem \ref{word_as_chain} concerns the Amit--Ashurst conjecture. We  discuss it in the next section.

\section{Toward Amit--Ashurst conjecture}

\begin{theorem}\label{Improved_bound}
Let $G$ be a finite nilpotent group of class $2$ and $w \in F_k$.
Then for each $g \in G$, we have $P_{w,G}(g) \geq |G'|^{-1}|G|^{-1}$, where $G'$ denotes the derived subgroup of $G$. 
\end{theorem}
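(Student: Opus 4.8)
The plan is to reduce the general class-$2$ nilpotent group to the prime-power case and then exploit the structural normal form provided by Theorem~\ref{word_as_chain}. Since a finite nilpotent group $G$ of class $2$ is the direct product of its Sylow subgroups $G = \prod_p G_p$, and word maps respect direct products in the sense that $P_{w,G} = \prod_p P_{w,G_p}$ while $|G'| = \prod_p |G_p'|$, it suffices to establish the bound $P_{w,G_p}(g) \geq |G_p'|^{-1}|G_p|^{-1}$ for each $p$-group $G_p$ of class $2$. Thus I would first reduce to the case where $G$ is a $p$-group of class $2$.

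For a $p$-group $G$ of class $2$, I would invoke Theorem~\ref{word_as_chain} to replace $w$, up to $F_k(G)$-automorphism (which preserves $P_{w,G}$ by Proposition~\ref{aut_eq} and Remark~\ref{F_k(G)-automorphic}), by the chain word
\[
w'' = x_1^{t_0}\,[x_1,x_2]^{t_1}\,[x_2,x_3]^{t_2}\cdots[x_{k-1},x_k]^{t_{k-1}},
\]
with each $t_i \in \mathcal P$. The next step is to count fibers of this chain word directly. The key computational fact, from Lemma~\ref{powers-of-commutators}, is that in a class-$2$ group each commutator $[x_{i},x_{i+1}]^{t_i}$ lies in the center and behaves linearly in each argument, so for fixed values of the odd-indexed (or suitably chosen) variables the remaining variables act on the commutator factors through homomorphisms into $G'$. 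My plan is to fix the variables so that $w''$ becomes an expression whose dependence on one free block of variables is through a homomorphism $G \to G'$ (or $G^k \to G'$), and then count preimages. Concretely, I expect the fiber over a generic target $g$ to decompose according to first fixing the power part $x_1^{t_0}$ together with a ``last'' variable in each commutator pair to hit a coset of $G'$, leaving at least $|G|$ choices, and then solving the resulting affine equation in $G'$, which has at least $|G|^{k}/(|G|\,|G'|)$ solutions because the relevant map onto its image in $G'$ is a group homomorphism with fibers of equal size.

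The main obstacle will be making the fiber-counting uniform over \emph{all} targets $g$ and all admissible tuples $(t_0,\dots,t_{k-1})$, including the degenerate cases where some $t_i = 0$ or where $t_0 = 0$ (so $w \in [F_k,F_k]$ and the image lies in $G'$). When $t_0 \neq 0$, the power part lets one sweep out cosets of $G'$ and the commutator part must only be solved within a single coset, which is where the factor $|G'|^{-1}$ rather than $|G'|^{-2}$ comes from; when $t_0 = 0$, one must argue that the image is contained in $G'$ and the worst-case fiber size over elements of $w(G)$ is still at least $|G'|^{-1}|G|^{-1}|G|^k$. The delicate point is that the commutator evaluation map need not be surjective onto $G'$, so I would bound $P_{w,G}(g)$ from below by $|w(G)|^{-1}$ times the minimal fiber size and then show $|w(G)| \leq |G'|\,|G|$ or, more carefully, track the image subgroup and the equidistribution of $w''$ over it. Establishing that the fibers over the image are equidistributed — which reduces to the homomorphism property of the commutator map on the free block of variables and the bilinearity from Lemma~\ref{powers-of-commutators} — is the crux, and once equidistribution over the image is in hand, the bound $P_{w,G}(g) \geq |G'|^{-1}|G|^{-1}$ follows by bounding the size of the image by $|G'|\,|G|$.
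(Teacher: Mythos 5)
Your opening moves coincide with the paper's: the reduction to $p$-groups via the Sylow decomposition (with $P_{w,G}(g)=\prod_p P_{w,G_p}(g_p)$) and the replacement of $w$ by the chain word $w''$ of Theorem \ref{word_as_chain} are exactly how the paper's proof begins. The genuine gap is in your fiber count, and it sits precisely at the point you yourself flag as the crux: the claim that the fibers of $w''$ over its image are equidistributed is false. Take $G$ extraspecial of order $p^3$ and exponent $p$, and $w''=[x_1,x_2]$ (the case $t_0=0$, $t_1=1$, $k=2$): the image is $G'$ of order $p$, so equidistribution would force every nonempty fiber to have size $p^6/p=p^5$, but the fiber over $1$ is the set of commuting pairs, of size $k(G)\,|G|=(p^2+p-1)p^3=p^5+p^4-p^3>p^5$, where $k(G)$ denotes the number of conjugacy classes. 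Commutator words are biased toward $1$; the bilinearity in Lemma \ref{powers-of-commutators} makes the commutator map a homomorphism in each variable \emph{separately}, not jointly, so its fibers are unions of cosets of varying index and nothing forces uniformity. Without equidistribution, your pigeonhole through a bound on $|w(G)|$ certifies the inequality only for \emph{some} element of the image, whereas the theorem demands it for every $g$. (Two smaller points: $w''(G)$ need not be a subgroup, so ``the image subgroup'' is not available to track; and, as in the paper's own proof, the statement has content only for $g\in w(G)$, since otherwise $P_{w,G}(g)=0$.)

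The missing idea, which is how the paper gets uniformity in $g$, is to \emph{anchor at a witness} instead of averaging over the image. Fix $g\in w(G)$ and $(g_1,\dots,g_k)$ with $w''(g_1,\dots,g_k)=g$. Freezing the even-indexed coordinates at $g_2,g_4,\dots$ and restricting the first coordinate to $G'\subseteq Z(G)$ (so that the power part $y_1\mapsto y_1^{t_0}$ becomes multiplicative and $[y_1,\cdot\,]$ vanishes) yields a homomorphism $\varphi_{\mathrm{odd}}\colon G'\times G^{\lceil k/2\rceil-1}\to G'$; freezing the odd block and premultiplying by $g_1^{-t_0}$ yields a homomorphism $\varphi_{\mathrm{even}}\colon G^{\lfloor k/2\rfloor}\to G'$. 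Translating the even block of the witness by any element of $\ker\varphi_{\mathrm{even}}$ stays in $w^{-1}(g)$, and then, re-anchoring at each new witness, translating the odd block by the kernel of the corresponding odd homomorphism (whose size bound $|G|^{\lceil k/2\rceil-1}$ is independent of the frozen even entries) again stays in the fiber; all resulting tuples are distinct. Since each kernel has index at most $|G'|$ in its domain, this gives $|w^{-1}(g)|\geq |G|^{\lceil k/2\rceil-1}\cdot |G|^{\lfloor k/2\rfloor}|G'|^{-1}=|G|^{k-1}|G'|^{-1}$, hence $P_{w,G}(g)\geq |G'|^{-1}|G|^{-1}$ uniformly in $g\in w(G)$, with no equidistribution statement and no control of $|w(G)|$ required. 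This local, translation-based count is the step your sketch needs in place of the equidistribution claim.
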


\begin{proof}
It is enough to assume that $G$ is a $p$-group for some prime $p$. Then, by Theorem \ref{word_as_chain}, there exist $t_0, t_1, t_2, \cdots, t_{k-1} \in \mathcal P$, where the set $\mathcal P$ consists of $0, 1$ and positive integral powers of $p$, such that $w$ is $F_k(G)$-automorphic to the word $x_1^{t_0} [x_1,x_2]^{t_1} [x_2,x_3]^{t_2} \cdots [x_{k-1},x_k]^{t_{k-1}}$. 

Let $g \in w(G)$ and 
$g_1,g_2,\dots,g_k \in G$ be such that $w(g_1,g_2,\dots,g_k) = g$. 
Let $k_1=\lceil \frac{k}{2} \rceil$ and $k_2=\lfloor \frac{k}{2} \rfloor$.
Define the maps $\varphi_{odd} : G' \times G^{k_1-1} \to G'$
and $\varphi_{even} : G^{k_2} \to G'$
as follows
\begin{align*}
\varphi_{odd}(y_1,y_3,\dots,y_{2k_1 -1})&=
\begin{cases}
w(y_1,g_2,y_3,g_4,\dots,y_{k-1},g_k), \text{ if } k \text{ is even} \\
w(y_1,g_2,y_3,g_4,\dots,g_{k-1},y_k), \text{ if } k \text{ is odd} 
\end{cases}  \\
\varphi_{even}(y_2,y_4,\dots,y_{2k_2}) &= 
\begin{cases}
{g_1}^{-t_0}w(g_1,y_2,g_3,y_4,\dots,g_{k-1},y_k), \text{ if } k \text{ is even} \\
{g_1}^{-t_0}w(g_1,y_2,g_3,y_4,\dots,y_{k-1},g_k), \text{ if } k \text{ is odd} 
\end{cases}
\end{align*}
It is easy to see that for finite nilpotent groups of class $2$, these maps are homomorphisms. 

Let $(h_1, h_3, \cdots, h_{2k_1-1}) \in {\rm ker}(\varphi_{odd})$ and $(h_2,h_4,\dots,h_{2k_2}) \in {\rm ker}(\varphi_{even})$. We make the following computations in two cases.\\
{\bfseries Case 1. } $k$ is odd.
\begin{align*}
w(h_1g_1,g_2,h_3g_3\dots,g_{k-1},h_{k}g_{k})&=\varphi_{odd}(h_1 g_1,h_3 g_3,\dots,h_{k} g_{k})\\
&=\varphi_{odd}(g_1,g_3,\dots,g_{k})\\&=w(g_1,g_2,g_3,g_4,\dots,g_{k-1},g_k)=g \\
\end{align*}
\begin{align*}
w(g_1,h_2 g_2,\dots,h_{k-1}g_{k-1},g_k)&={g_1}^{t_0}\varphi_{even}(h_2 g_2,h_4 g_4,\dots,h_{k-1} g_{k-1})\\
&={g_1}^{t_0}\varphi_{even}(g_2,g_4,\dots,g_{k-1}) \\ &=w(g_1,g_2,g_3,g_4,\dots,g_{k-1},g_k)=g
\end{align*}

{\bfseries Case 2. } $k$ is even.
\begin{align*}
w(h_1g_1,g_2,h_3g_3\dots,h_{k-1}g_{k-1},g_k)&=\varphi_{odd}(h_1 g_1,h_3 g_3,\dots,h_{k-1} g_{k-1})\\
&=\varphi_{odd}(g_1,g_3,\dots,g_{k-1})\\&=w(g_1,g_2,g_3,g_4,\dots,g_{k-1},g_k)=g
\end{align*}

\begin{align*}
w(g_1,h_2 g_2,\dots,g_{k-1},h_k g_k)&={g_1}^{t_0}\varphi_{even}(h_2 g_2,h_4 g_4,\dots,h_k g_k)\\
&={g_1}^{t_0}\varphi_{even}(g_2,g_4,\dots,g_k) \\ &=w(g_1,g_2,g_3,g_4,\dots,g_{k-1},g_k)=g
\end{align*}

Thus, if $w(g_1, g_2, \cdots, g_k) = g \in w(G)$, then there are at least $|{\rm ker}(\varphi_{odd})|.|{\rm ker}(\phi_{even})|$ many elements in the fiber $w^{-1}(g)$.
Consequently, $$P_{w,G}(g) = \frac{|w^{-1}(g)|}{|G^k|}\geq 
\frac{1}{|G^k|}|{\rm ker}(\varphi_{odd})|.|{\rm ker}(\varphi_{even})| \geq
\frac{1}{|G^k|}
|G|^{\lceil \frac{k}{2} \rceil-1}.
\frac{|G|^{\lfloor \frac{k}{2}\rfloor}}{|G'|} =\frac{1}{|G'||G|}.$$ \end{proof}

In the following corollary, we prove Amit--Ashurst conjecture in a particular case.

\begin{corollary}\label{condition_on_G-prime}
Let $p$ be an odd prime and $G$ be a finite $p$-group of nilpotency class $2$ with the property that $g^p \in G'$ for every $g \in G $. Let $w \in F_k$ and $g \in G$. Then, 
$$P_{w,G}(g) \geq \frac{1}{|G'|^2}$$
Thus, if $|\frac{G}{G'}| \geq |G'|$ then Amit--Ashurst conjecture holds for $G$.
\end{corollary}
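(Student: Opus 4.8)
The plan is to combine the normal form of Theorem \ref{word_as_chain} with the arithmetic forced by the hypothesis, and then refine the fibre-counting idea behind Theorem \ref{Improved_bound}. First I would extract two consequences of the assumption $g^p\in G'$, both via Lemma \ref{powers-of-commutators}. For $a,b\in G$ we have $[a,b]^p=[a^p,b]$, and $a^p\in G'\subseteq Z(G)$, so $[a,b]^p=1$; since $G'$ is abelian and generated by commutators, $\exp(G')=p$. Secondly, because $p$ is odd, $p\mid \tfrac{p^j(p^j-1)}{2}$, so the identity $(ab)^{p^j}=a^{p^j}b^{p^j}[b,a]^{p^j(p^j-1)/2}$ together with $\exp(G')=p$ shows that each power map $g\mapsto g^{p^j}$ is an endomorphism of $G$. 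Writing $w$ in the chain form $x_1^{t_0}\prod_i[x_i,x_{i+1}]^{t_i}$ with $t_i\in\mathcal P$, the exponent-$p$ fact kills every commutator whose exponent is a positive power of $p$ and collapses $x_1^{t_0}$ to $1$ when $t_0=p^j$ with $j\geq2$; thus on $G$ the word map of $w$ agrees with that of $x_1^{t_0}\prod_{i\in S}[x_i,x_{i+1}]$ for some $t_0\in\{0,1,p\}$ and some index set $S$.

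I would then isolate the case $t_0=1$. Here, for any fixed $x_2,\dots,x_k$, the map $x_1\mapsto w(x_1,\dots,x_k)$ is a bijection of $G$ (the commutator factors are central and shift the argument by a central element depending injectively on $x_1$), so $P_{w,G}\equiv|G|^{-1}$, which already gives the Amit--Ashurst inequality for such $w$. It remains to treat $t_0\in\{0,p\}$, where $w(G)\subseteq G'$, and here I aim for the stated bound $|G'|^{-2}$.

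For this main case I would fix $g\in w(G)$ and a preimage $(g_1,\dots,g_k)$, and split the $k$ variables by parity into $k_1=\lceil k/2\rceil$ odd slots and $k_2=\lfloor k/2\rfloor$ even slots. As in Theorem \ref{Improved_bound}, perturbing the even variables (odd ones frozen at the $g_i$) gives a homomorphism $\varphi_{even}\colon G^{k_2}\to G'$, so $|\ker\varphi_{even}|\geq|G|^{k_2}/|G'|$. The decisive step is that, for each fixed $b\in\ker\varphi_{even}$, the map $\Psi_b\colon G^{k_1}\to G'$ sending an odd perturbation $a$ to $g^{-1}\,w(a\cdot g_{\mathrm{odd}},\,b\cdot g_{\mathrm{even}})$ (componentwise products) is again a homomorphism on all of $G^{k_1}$: each commutator $[x_i,x_{i+1}]$ links consecutive, hence opposite-parity, indices and so is additive in the odd variables, while the factor $x_1^{t_0}$ is an endomorphism by the computation above. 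Freezing the odd variables and perturbing the even ones by $b\in\ker\varphi_{even}$ leaves the value at $g$, so $\Psi_b(1)=1$; hence every pair $(a,b)$ with $b\in\ker\varphi_{even}$ and $a\in\ker\Psi_b$ yields a distinct element of $w^{-1}(g)$. Since $|\ker\Psi_b|\geq|G|^{k_1}/|G'|$ for each $b$, summing gives
$$|w^{-1}(g)|\;\geq\;\sum_{b\in\ker\varphi_{even}}|\ker\Psi_b|\;\geq\;\frac{|G|^{k_2}}{|G'|}\cdot\frac{|G|^{k_1}}{|G'|}\;=\;\frac{|G|^{k}}{|G'|^{2}},$$
so $P_{w,G}(g)\geq|G'|^{-2}$. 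Combining the two cases and using $|G|=|G/G'|\,|G'|\geq|G'|^2$ when $|G/G'|\geq|G'|$ then yields $P_{w,G}(g)\geq|G|^{-1}$, i.e.\ Amit--Ashurst for $G$.

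I expect the main obstacle to be exactly the bilinear cross term $\prod_i[a_i,b_{i+1}]$: perturbing odd and even variables independently does \emph{not} keep one inside the fibre, and a crude correction loses an additional factor of $|G'|$, giving only $|G'|^{-3}$. The resolution is to treat the odd perturbation inside the $b$-dependent homomorphism $\Psi_b$, so that the cross term becomes part of a genuine linear condition on $a$ with kernel of index at most $|G'|$; letting the first odd variable range over all of $G$ (rather than only over $G'$, as in Theorem \ref{Improved_bound}) is what upgrades $|G|^{k-1}/|G'|$ to $|G|^{k}/|G'|^2$. This is precisely where oddness of $p$ is indispensable, since it is needed both for $\exp(G')=p$ to annihilate $[b,a]^{t_0(t_0-1)/2}$ and for the power map to be an endomorphism, and both fail at $p=2$.
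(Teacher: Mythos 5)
Your proposal is correct and follows the same basic strategy as the paper's proof: reduce to the chain form of Theorem \ref{word_as_chain}, split the variables by parity, freeze one parity class at a known preimage, and lower-bound the fibre by a product of kernels of homomorphisms into $G'$, with the hypothesis $g^p\in G'$ (together with $p$ odd) used precisely to let the odd variables range over all of $G^{\lceil k/2\rceil}$ rather than over $G'\times G^{\lceil k/2\rceil-1}$ as in Theorem \ref{Improved_bound}. The paper's own proof is much terser --- it defines $\psi_{odd}\colon G^{\lceil k/2\rceil}\to G'$, asserts that it ``may easily be checked to be a homomorphism'' using oddness of $p$, and multiplies $|\ker\psi_{odd}|\cdot|\ker\varphi_{even}|$ --- and your write-up fills in two genuine gaps in that sketch. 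First, the cross terms: membership of the odd perturbation in $\ker\psi_{odd}$ and of the even perturbation in $\ker\varphi_{even}$ does not by itself place the \emph{simultaneous} perturbation in the fibre, because of the bilinear terms $[h_i,h_{i+1}]^{t_i}$; your device of the $b$-dependent homomorphisms $\Psi_b$ (re-basing the odd-variable homomorphism at each even-perturbed point $b\in\ker\varphi_{even}$) is exactly the right repair, and the count is unaffected since the kernel bound $|G|^{\lceil k/2\rceil}/|G'|$ is uniform in $b$. Second, the case $t_0$ coprime to $p$: there $\psi_{odd}$ does not map into $G'$, and in fact the displayed inequality $P_{w,G}(g)\geq |G'|^{-2}$ is false as literally stated (take $w=x_1$ on an extraspecial group of order $p^5$, where $P_{w,G}\equiv p^{-5}<p^{-2}$); your case split, showing $P_{w,G}\equiv |G|^{-1}$ when $t_0=1$, is what the statement actually needs, and the final Amit--Ashurst conclusion under $|G/G'|\geq|G'|$ survives in both cases. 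Your preliminary reductions --- $\exp(G')=p$ from $[a,b]^p=[a^p,b]=1$, hence commutator exponents collapse to $\{0,1\}$ and $t_0$ to $\{0,1,p\}$, and $g\mapsto g^{p^j}$ an endomorphism because $p$ odd makes $p\mid p^j(p^j-1)/2$ --- are also correct and make explicit the arithmetic that the paper leaves implicit in its ``easily checked'' remark.
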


\begin{proof}
The proof is based on the main idea of Theorem \ref{Improved_bound}.
Let $g \in w(G)$ and $(g_1, g_2, \cdots, g_k)$ be such that $w(g_1, g_2, \cdots, g_k)=g$. For $r = \lceil \frac{k}{2}\rceil$ consider the map $\psi_{odd} : G^r \to G'$ defined by
$$\psi_{odd}(y_1,y_3,\dots,y_{2r-1})=
\begin{cases}
w(y_1,g_2,y_3,g_4,\dots,y_{k-1},g_k), \text{ if } k \text{ is even} \\
w(y_1,g_2,y_3,g_4,\dots,g_{k-1},y_k), \text{ if } k \text{ is odd} 
\end{cases}
$$
The map $\psi_{odd}$ may easily be checked to be a homomorphism.
The hypothesis that $p$ is odd is to be used in this checking.
Now, if $w(g_1, g_2, \cdots, g_k) = g \in w(G)$, then there are at least $|{\rm ker}(\psi_{odd})|.|{\rm ker}(\varphi_{even})|$ many elements in the fiber $w^{-1}(g)$, where 
$\varphi_{even}$ is the homomorphism defined in  the proof of Theorem \ref{Improved_bound}.

Consequently, $$P_{w,G}(g) = \frac{1}{|G^k|}N_{(G,w=g)} \geq 
\frac{1}{|G^k|}|{\rm ker}(\psi_{odd})|.|{\rm ker}(\varphi_{even})| \geq
\frac{1}{|G^k|}
\frac{|G|^{\lceil \frac{k}{2}\rceil}}{|G'|}
\frac{|G|^{\lfloor \frac{k}{2}\rfloor}}{|G'|} =\frac{1}{|G'|^2}.$$  
\end{proof}

Let $\Phi(G)$ denote the Frattini subgroup of a group $G$.
We recall that a $p$-group $G$ is called a \emph{special $p$-group} if $G' = \Phi(G) = Z(G)$, and both $Z(G)$ and $G/Z(G)$ are
elementary abelian $p$-groups. Additionally, if $Z(G)$ is the cyclic $p$-group then $G$ is called an
\emph{extraspecial $p$-group}. We remark that extraspecial $p$-groups satisfy the hypothesis of the above corollary, and hence we have the following.

\begin{corollary}\label{extraspecial-p_groups}
Let $p$ be an odd prime and $G$ be an extraspecial $p$-group. Then Amit--Ashurst conjecture holds for $G$.
\end{corollary}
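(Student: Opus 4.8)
The plan is to deduce this corollary directly from Corollary \ref{condition_on_G-prime} by verifying that an extraspecial $p$-group meets both its standing hypothesis and the additional numerical condition $|G/G'| \geq |G'|$ under which that corollary already delivers the Amit--Ashurst bound. So the work is not to run the probabilistic argument again, but to check the structural bookkeeping.

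First I would recall the defining facts about an extraspecial $p$-group $G$: it has nilpotency class $2$, and its center, derived subgroup, and Frattini subgroup coincide, i.e.\ $Z(G) = G' = \Phi(G)$, with this common subgroup cyclic. Since $G$ is extraspecial rather than merely special, $Z(G)$ is cyclic of order exactly $p$, so $|G'| = p$. Next I would establish that $g^p \in G'$ for every $g \in G$: for any finite $p$-group one has $\Phi(G) = G' G^p$, where $G^p$ denotes the subgroup generated by $p$-th powers, and combining this with $\Phi(G) = G'$ forces $G^p \subseteq G'$, whence $g^p \in G'$ for all $g$. Together with class $2$ and the oddness of $p$, this is precisely the hypothesis of Corollary \ref{condition_on_G-prime}, so that corollary applies and yields $P_{w,G}(g) \geq |G'|^{-2}$ for all $w \in F_k$ and all $g \in G$.

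Then I would verify the numerical condition $|G/G'| \geq |G'|$. Since the commutator map equips $G/Z(G)$ with a nondegenerate alternating form, $G/Z(G)$ is elementary abelian of even rank $2n$ with $n \geq 1$; as $Z(G) = G'$, this gives $|G| = p^{2n+1}$ and $|G/G'| = p^{2n} \geq p = |G'|$. By the final assertion of Corollary \ref{condition_on_G-prime}, this inequality upgrades the bound $P_{w,G}(g) \geq |G'|^{-2}$ to $P_{w,G}(g) \geq |G|^{-1}$, which is exactly the Amit--Ashurst conjecture for $G$.

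I do not expect a serious obstacle, since the entire analytic content is already packaged in Corollary \ref{condition_on_G-prime}; the only point requiring genuine care is the justification that $g^p \in G'$, which rests on the identity $\Phi(G) = G' G^p$ together with the coincidence $\Phi(G) = G'$ special to (extra)special groups. Once that is in place, the remaining verifications are immediate order computations.
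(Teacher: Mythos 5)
Your proposal is correct and follows exactly the paper's route: the paper disposes of this corollary with the single remark that extraspecial $p$-groups satisfy the hypotheses of Corollary \ref{condition_on_G-prime}, which is precisely the deduction you carry out. Your verifications --- $g^p \in G'$ via $\Phi(G) = G'G^p = G'$, and $|G/G'| = p^{2n} \geq p = |G'|$ --- are the details the paper leaves implicit, and they are accurate.
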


Our methods do not establish Amit--Ashurst conjecture for special $p$-groups. This is because there exist special $p$-groups $G$ such that $|\frac{G}{G'}| < |G'|$. One such example has the following presentation.

\begin{align*}
G = \langle e_1, \cdots, e_4, f_1, \cdots,  f_5 ~|~ & e_i^p = f_j^p = 1, [e_i, f_j] = 1, [f_j, f_k] = 1,\\ &[e_1, e_2] = f_1, [e_1, e_3] = f_2, [e_1, e_4] = f_3, \\ 
& [e_2, e_3] = f_4, [e_2, e_4] = f_5, [e_3, e_4] = 1; \\ & \quad \quad \quad \quad \quad \quad \quad \quad 1 \leq i \leq 4; 1 \leq j,k \leq 5 \rangle
\end{align*}

The group $G$ defined above has order $p^9$. This is the smallest order where special $p$-groups do not satisfy $|\frac{G}{G'}| \geq |G'|$. We refer to \cite[Remark 2.4]{Renza-Cortini} for details. We are not aware of any result in literature that proves Amit--Ashurst conjecture for special $p$-groups, except for the ones covered by Corollary \ref{condition_on_G-prime}.

Now we show that in some more cases the Amit--Ashurst conjecture holds for groups of nilpotency class $2$. We achieve it through the following extension of Theorem \ref{word_as_chain}.

\begin{theorem}\label{exhaustive_p}
Let $w \in F_k$ be a word and $G$ be a $p$-group of nilpotency class $2$. 
\begin{enumerate}
\item[(i).] If $w \notin [F_k, F_k]$ then either $P_{w,G}$ is a constant function or $w$ and $v_p(p^r, \ell; s_1,\cdots,s_{n-1})$ are $F_k(G)$-automorphic for some $r > 0$; 
$s_{1} < s_{2} < \dots < s_{\ell-1} < r$, and
$s_j < r$ for $j \geq \ell$.

\item[(ii).] If $w \in [F_k, F_k]$ then for some $s_1, s_2, \dots, s_{n-1}$, the words $w$ and $v_p(0, 1; s_1,s_2,\cdots,s_{n-1})$ are $F_k(G)$-automorphic.
\end{enumerate}
\end{theorem}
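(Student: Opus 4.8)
The plan is to reduce the entire statement to the chain normal form of Theorem \ref{word_as_chain} and then read off the two cases from the leading power. By that theorem, $w$ is $F_k(G)$-automorphic to $x_1^{t_0}[x_1,x_2]^{t_1}[x_2,x_3]^{t_2}\cdots[x_{k-1},x_k]^{t_{k-1}}$ with each $t_i\in\{0,1\}\cup\{p^j:j\ge1\}$; writing $t_i=p^{s_i}$ whenever $t_i\ne0$, the whole theorem becomes a normalization problem for this chain. Concretely I must (a) split the dichotomy according to whether $t_0=0$, (b) disjoin the commutators that are \emph{not} anchored to the power part, and (c) force the surviving exponents to be strictly increasing along the linked part and bounded by $r$.

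\textbf{Part (ii) and the constant alternative.} The power part in Theorem \ref{word_as_chain} records the $x_1$-content of $w$ (its proof begins by writing $w=x_1^t c$ with $c\in[F_k,F_k]$), so $w\in[F_k,F_k]$ forces $t_0=0$. The chain is then a pure product of consecutive commutators, and I would feed it directly into Lemma \ref{disjoint_commutators} with $\alpha=p$, splitting at the indices where $t_i=0$ into maximal consecutive runs and applying the lemma to each run. The output is a disjoint product $[x_1,x_2]^{p^{s_1}}[x_3,x_4]^{p^{s_3}}\cdots$, which after relabeling is exactly $v_p(0,1;s_1,\dots,s_{n-1})$, since $\ell=1$ makes the linked part empty. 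For the constant alternative in part (i), suppose $t_0=1$. Fixing $g_2,\dots,g_k$, the map $g_1\mapsto w(g_1,g_2,\dots,g_k)=g_1[g_1,g_2]^{t_1}D$, where $D$ is the central commutator factor independent of $g_1$, is a bijection of $G$: a short computation in the class-$2$ group shows the difference of two preimages is central and equals the induced change in the commutator factor, hence is trivial. Thus every fibre has size $|G|^{k-1}$ and $P_{w,G}\equiv|G|^{-1}$.

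\textbf{Part (i), the substantive case $t_0=p^r$ with $r\ge1$.} Here the power part \emph{anchors} the variable $x_1$, and the idea is to run the disjoining moves of Lemma \ref{disjoint_commutators} on the commutator chain while never substituting $x_1$. Every Case~1 move $x_n\mapsto x_nx_{n-2}^{\cdots}$ and every Case~2 move with base index $\ge2$ touches only $x_2,\dots,x_k$, leaving $x_1^{p^r}$ intact; the only forbidden move is the Case~2 step with base index $1$, which substitutes $x_1$ itself. Consequently I can disjoin every commutator except the maximal run $[x_1,x_2]^{p^{s_1}}\cdots[x_{\ell-1},x_\ell]^{p^{s_{\ell-1}}}$ that is anchored at $x_1$ and whose exponents strictly increase (a non-increasing step would license a Case~1 disjoining), and this run becomes the linked part while the remainder becomes the disjoint part, producing the shape of $v_p(p^r,\ell;\dots)$. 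It remains to push all exponents below $r$. For the commutator $[x_1,x_2]^{p^{s}}$ directly on the anchor with $s\ge r$, one has $[x_1,x_2]^{p^{s}}=[x_1^{p^{s}},x_2]$, which sees $x_1$ only through $x_1^{p^r}$; I would cancel it against the commutator correction generated in the power part by a substitution $x_1\mapsto x_1x_2^{c}$ (via Lemma \ref{powers-of-commutators}(iii) and the relations valid in $F_k(G)$), then reabsorb the resulting auxiliary power by a compensating variable substitution.

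\textbf{Main obstacle.} I expect step (c)---driving \emph{all} exponents below $r$---to be the crux. The difficulty is that the substitution used to kill a high commutator power simultaneously creates a spurious power of another variable, and reabsorbing that power tends to regenerate a commutator of the same $p$-adic size, so the reduction cannot be purely local: it must be organised around a genuine monovariant (for instance a lexicographic measure on the multiset of exponents together with the chain length) and must exploit the relations holding in $F_k(G)$ rather than in the free group---exactly as in the degenerate situation where $[x_1,x_2]^{p^{s}}$ already vanishes on $G$ once $s$ exceeds the relevant exponent. The disjoint high commutators, which are not attached to the anchor, require extra care and must be reduced before they are fully detached from $x_1$. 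Finally, the characteristic-$2$ case needs separate attention, because $(gh)^{p^r}=g^{p^r}h^{p^r}[h,g]^{\binom{p^r}{2}}$ carries a correction term of $p$-adic valuation only $r-1$ when $p=2$, so the absorption bookkeeping there differs slightly from the odd case.
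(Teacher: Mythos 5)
Your reduction to the chain normal form of Theorem \ref{word_as_chain}, your treatment of part (ii) via Lemma \ref{disjoint_commutators}, and your observation that the disjoining moves of that lemma can be run without ever substituting $x_1$ (only the Case~2 move with base index $1$ touches $x_1$, and in the anchored setting that move is never needed) are all sound and consistent with the paper's argument. Your direct bijectivity argument showing $P_{w,G}\equiv|G|^{-1}$ when $t_0=1$ is also correct, and is a slightly more hands-on substitute for the paper's route, which instead notes that when $r=0$ every exponent satisfies $s_j\ge 0=r$, eliminates all commutators, and lands on the word $x_1$.

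However, the step you yourself flag as the ``main obstacle'' --- forcing every surviving exponent below $r$ --- is precisely where your proposal has a genuine gap, and the paper closes it with a single move you did not find: for any offending commutator $[x_j,x_{j+1}]^{p^{s_j}}$ with $s_j\ge r$, apply the transformation $x_1\mapsto x_1[x_{j+1},x_j]^{p^{s_j-r}}$. The multiplier is itself a commutator, hence \emph{central} in any group of class $2$, so $(x_1c)^{p^r}=x_1^{p^r}c^{p^r}$ holds exactly --- no binomial correction term arises even when $p=2$ --- and every commutator containing $x_1$ is unchanged because $[x_1c,x_n]=[x_1,x_n]$ for central $c$. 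The net effect is that the power part emits exactly $[x_{j+1},x_j]^{p^{s_j}}$, which (commutator factors being central and hence freely permutable) cancels the offending factor outright: no spurious power of another variable is created, nothing needs reabsorbing, no monovariant or lexicographic induction is required, and the move works uniformly whether the commutator lies in the linked or the disjoint part, so your worry about reducing high disjoint commutators ``before they are detached from $x_1$'' is moot. Your sketched alternative $x_1\mapsto x_1x_2^{c}$ is exactly the substitution that generates the $x_2^{cp^r}$ debris and the valuation-$(r-1)$ correction at $p=2$ that you describe, and since you leave that bookkeeping unexecuted, part (i) of your proof is incomplete as written. With the central-multiplier move substituted in, the remainder of your outline (disjoining away from $x_1$, then ordering the linked exponents to get $s_1<s_2<\cdots<s_{\ell-1}<r$ along the lines of Lemma \ref{disjoint_commutators}) goes through essentially as in the paper.
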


\begin{proof}
If $w \in [F_k, F_k]$ then the theorem follows immediately from Lemma \ref{disjoint_commutators}.

Let $w \notin [F_k, F_k]$.
From Theorem \ref{word_as_chain}, and after applying elementary Nielsen transformations if required, we assume that 
\begin{align}\label{a-form-of-w}
w &= x_1^{p^r} \left( \prod_{i=1}^{\ell-1} [x_i,x_{i+1}]^{p^{s_{i}}} \right) \left( [x_{\ell},x_{\ell+1}]^{p^{s_{\ell}}} [x_{\ell+2},x_{\ell+3}]^{p^{s_{\ell+2}}} \dots [x_{n-1},x_n]^{p^{s_{n-1}}} \right)
\end{align}
where $1 \leq \ell < n \leq k$, and $r, s_i$ are nonnegative integers.
If possible, let $j \in \{1,2,\dots,\ell, \ell+2, \ell+4, n-1\}$ be the smallest index such that ${s_{j}} \geq r$. We consider the following two cases.\\

{\bfseries Case 1}. $j \in \{\ell-1,\ell,\ell+2,\dots,n-1 \}$.\\
In this case, we use the transformation $x_1 \mapsto x_1[x_{j+1},x_j]^{p^{s_{j}-r}}$ to eliminate the commutator $[x_j,x_{j+1}]^{p^{s_{j}}}$ from the expression of the word. A permutation of variables may then be carried out to eliminate free variables. This way, one indeed ensures the condition $s_j < r$ for 
$j \geq \ell-1$. \\

{\bfseries Case 2}. $j \in \{1,2,\dots,\ell-2 \}$; $\ell \geq 3$.\\
As in case 1, we first eliminate the commutator $[x_j,x_{j+1}]^{p^{s_{j}}}$ and obtain that $w$ is 
$F_k(G)$-automorphic to a word of the form
$$x_1^{p^r} \left( \prod_{i=1}^{j-1} [x_i,x_{i+1}]^{p^{s_{i}}} \right) \left( \prod_{i=j+1}^{\ell-1} [x_i,x_{i+1}]^{p^{s_{i}}} \right) \left( [x_{\ell},x_{\ell+1}]^{p^{s_{\ell}}} \dots [x_{n-1},x_n]^{p^{s_{n-1}}} \right).$$
and then apply Lemma \ref{disjoint_commutators} on $\left( \prod_{i=j+1}^{\ell-1} [x_i,x_{i+1}]^{p^{s_{i}}} \right)$ to split it into disjoint commutators. Further, using transformations as in case $1$ we obtain that $w$ is $F_k(G)$-automorphic to a word of the form
$$x_1^{p^r} \left( \prod_{i=1}^{j-1} [x_i,x_{i+1}]^{p^{s_{i}}} \right) \left( [x_{j+1},x_{j+2}]^{p^{s_{j+1}}} \dots [x_{n-1},x_n]^{p^{s_{n-1}}} \right),$$
where ${s_{t}} < r$ for every $t \geq j$. 
Thus in (\ref{a-form-of-w}), the assumption ${s_{j}} < r$, whenever $j \in \{1,2,\dots,\ell, \ell+2, \ell+4, \cdots, n-1\}$ can indeed be imposed, maintaining that the word of this form is indeed $F_k(G)$-automorphic to $w$.

Now, we examine the commutator $\left( \prod_{i=1}^{\ell-1} [x_i,x_{i+1}]^{p^{s_{i}}} \right)$ in (\ref{a-form-of-w}) along the lines of the proof of Lemma \ref{disjoint_commutators} to get the desired ordering $s_{1} < s_{2} < \dots < s_{\ell-1} < r$. Thus, if $r=0$ then $w$ is $F_k(G)$-automorphic to $x_1$, and if $r >0$ then $w$ is $F_k(G)$-automorphic to $v(p^r, \ell; s_1,\cdots,s_{n-1})$ with $s_{1} < s_{2} < \dots < s_{\ell-1} < r$, and $s_j < r$ for $j \geq \ell$.
\end{proof}

As a consequence of the above theorem, we obtain an elementary proof of Amit--Ashurst conjecture in two specific cases.

\begin{corollary}\label{power_word}
Let $G$ be a finite $p$-group of nilpotency class $2$. Let $r \in \mathbb{N}, \ell \leq k$, and $s_j \in \mathbb{N}$ be such that $s_j \geq r$ for every $j$. Then, for any word that is $F_k(G)$-automorphic to $v_p(r,\ell; s_1,s_2,\cdots,s_{n-1})$, the Amit--Ashurst conjecture holds on $G$. 
\end{corollary}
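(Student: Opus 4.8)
The plan is to reduce the given word all the way down to a pure power word and then read off the Amit--Ashurst bound for that power word. Since $F_k(G)$-automorphic words are identically distributed (see Proposition \ref{aut_eq} and Remark \ref{F_k(G)-automorphic}), and $w$ is assumed $F_k(G)$-automorphic to $v := v_p(p^r,\ell;s_1,\dots,s_{n-1})$, it suffices to treat $v$. The key observation is that the standing hypothesis $s_j \geq r$ for every $j$ makes each commutator exponent $p^{s_j}$ divisible by the exponent $p^r$ of the power part. This is exactly the configuration in which the absorption transformations appearing in the proof of Theorem \ref{exhaustive_p} apply to \emph{every} commutator at once, rather than only to those with $s_j \geq r$ as in that argument.

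Concretely, I would eliminate the commutators one at a time. For $[x_j,x_{j+1}]^{p^{s_j}}$ with $j\geq 2$, apply $x_1 \mapsto x_1[x_{j+1},x_j]^{p^{s_j-r}}$; since commutators are central in $G$, Lemma \ref{powers-of-commutators} gives $x_1^{p^r}\mapsto x_1^{p^r}[x_{j+1},x_j]^{p^{s_j}}$, which cancels the target commutator, while every commutator not involving $x_1$ is left untouched and $[x_1,x_2]$ is unchanged because $[x_1 z, x_2]=[x_1,x_2]$ for central $z$. Iterating removes all commutators except possibly $[x_1,x_2]^{p^{s_1}}$, which is finally cleared by the transformation $x_1\mapsto x_1[x_2,x_1]^{p^{s_1-r}}$. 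After these steps $v$, and hence $w$, is $F_k(G)$-automorphic to $x_1^{p^r}$.

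It then remains to record the bound for the power word. We obtain $P_{w,G}=P_{x_1^{p^r},G}$, and for every $g\in G$
\[
P_{x_1^{p^r},G}(g)=\frac{|\{h\in G: h^{p^r}=g\}|}{|G|}.
\]
If $g\in w(G)$, then $g$ is a $p^r$-th power in $G$, so the set on the right is nonempty and $P_{w,G}(g)\geq |G|^{-1}$, which is precisely the Amit--Ashurst bound. (For $g\notin w(G)$ the probability is $0$, so as in the proof of Theorem \ref{Improved_bound} the claim is understood for $g$ in the image.)

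The one genuinely nonroutine point is verifying that the final transformation $x_1\mapsto x_1[x_2,x_1]^{p^{s_1-r}}$, which involves $x_1$ on both sides, is a bona fide $F_k(G)$-automorphism and not merely a substitution. I expect this to be the main obstacle, and I would resolve it by checking that for fixed $g_2$ the map $\sigma\colon g_1\mapsto g_1[g_2,g_1]^{p^{s_1-r}}$ is an automorphism of $G$: writing $\chi(g_1)=[g_2,g_1]^{p^{s_1-r}}=[g_2^{\,p^{s_1-r}},g_1]$, a class-$2$ computation shows $\chi$ is a homomorphism into $Z(G)$, so $\sigma$ is an endomorphism; any $g_1\in\ker\sigma$ satisfies $g_1=\chi(g_1)^{-1}\in Z(G)$, whence $[g_2,g_1]=1$ and $g_1=1$, so $\sigma$ is bijective. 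All the other transformations merely translate the first coordinate by a fixed central element and are visibly bijective, so the induced maps on $G^k$ are distribution-preserving. The rest of the argument is the bookkeeping already carried out in Theorem \ref{exhaustive_p}.
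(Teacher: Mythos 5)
Your proposal is correct and follows essentially the same route as the paper: the paper's proof also reduces $v_p(p^r,\ell;s_1,\dots,s_{n-1})$ to the power word $x_1^{p^r}$ via the absorption transformations $x_1 \mapsto x_1[x_{j+1},x_j]^{p^{s_j-r}}$ from Theorem \ref{exhaustive_p}, and then concludes the bound $P_{w,G}(g) \geq |G|^{-1}$ for $g \in w(G)$. You in fact supply details the paper leaves implicit --- notably the verification that the self-referential substitution $x_1 \mapsto x_1[x_2,x_1]^{p^{s_1-r}}$ is a genuine bijective transformation, and the explicit fiber count for $x_1^{p^r}$ --- which strengthens rather than departs from the paper's argument.
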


\begin{proof}
The proof follows from the claim that $v_p(r,\ell; s_1,s_2,\cdots,s_{n-1})$, with each $s_j \geq r$, is $F_k(G)$ automorphic to $x_1^{p^r}$. 
That these two words are $F_k(G)$-automorphic can be established using automorphisms as in the proof of Theorem \ref{exhaustive_p}.
\end{proof}

The following is an interesting corollary to Theorem \ref{exhaustive_p}.

\begin{corollary}\label{surjective_word}
Every surjective word on a finite nilpotent group of nilpotency class $2$ is $F_k(G)$-automorphic to the identity word and thus satisfies the Amit--Ashurst conjecture.
\end{corollary}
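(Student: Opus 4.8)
The plan is to peel away every feature of $w$ until only a single free variable remains, using three successive $F_k(G)$-automorphisms. First I would clear away the degenerate cases. If $G$ is trivial the claim is vacuous, so assume $G\neq 1$. If $w\in[F_k,F_k]$ then every value of the word map lies in $G'$, which is a proper subgroup of a nontrivial nilpotent group, so $w$ is not surjective; hence $w\notin[F_k,F_k]$. Now let $a_i$ be the exponent sum of $x_i$ in $w$ and $d=\gcd(a_1,\dots,a_k)$. Surjectivity of $w$ on $G$ forces surjectivity on the abelian quotient $A=G/G'$, where the word map is $(\bar g_i)\mapsto\sum_i a_i\bar g_i$ with image $dA$; thus $dA=A$, i.e. $d$ is a unit modulo $\exp(A)=\exp(G/G')$. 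Writing $G=\prod_p G_p$ for the Sylow decomposition, $G_p/G_p'$ is nontrivial exactly when $G_p$ is, so the primes dividing $\exp(G/G')$ are precisely those dividing $\exp(G)$; therefore $d$ is in fact coprime to $\exp(G)$.

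Next I would carry out the elimination. A Nielsen automorphism realizing the $GL_k(\mathbb Z)$-reduction of $(a_1,\dots,a_k)$ to $(d,0,\dots,0)$ on the abelianization makes $w$ $F_k(G)$-automorphic (indeed $F_k$-automorphic, by Proposition \ref{aut_eq}) to $x_1^{d}c$ with $c\in[F_k,F_k]$. Since $d$ is coprime to $\exp(G)$, I can choose $e$ with $de\equiv 1\pmod{\exp G}$; the substitution $x_1\mapsto x_1^{e}$ is an $F_k(G)$-automorphism of the type used in the proof of Theorem \ref{word_as_chain}, because $x\mapsto x^{e}$ is a bijection of $G$. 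It sends $x_1^{d}$ to the word map of $x_1$ (as $g^{de}=g$ for all $g$) and turns $c$ into a central commutator remainder $c'\in[F_k,F_k]$ (using Lemma \ref{powers-of-commutators}). Finally, because $G$ has class $2$ the element $c'$ is central, so $x_1\mapsto x_1(c')^{-1}$ is an $F_k(G)$-automorphism: its candidate inverse $x_1\mapsto x_1c'$ is well defined since replacing $x_1$ by $x_1(c')^{-1}$ leaves every commutator $[x_1,x_j]$ unchanged, whence $c'$ is fixed by the substitution. This automorphism sends $x_1c'$ to $x_1$. Chaining the three steps, $w$ is $F_k(G)$-automorphic to $x_1$, so $P_{w,G}=P_{x_1,G}$ is uniform and $P_{w,G}(g)=|G|^{-1}\geq|G|^{-1}$, which is the Amit--Ashurst bound.

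The genuinely delicate point is the passage from local to global. Applied to each Sylow factor, Theorem \ref{exhaustive_p}(i) already settles the $p$-group case: the alternative in which $w$ is automorphic to $v_p(p^{r},\ell;s_1,\dots,s_{n-1})$ with $r>0$ is impossible, since then every value is $g_1^{p^{r}}c$ with $c\in G_p'$, lying modulo $G_p'$ in the proper subgroup $(G_p/G_p')^{p^{r}}$, contradicting surjectivity onto $G_p/G_p'$; so $w$ is $F_k(G_p)$-automorphic to $x_1$ for each $p$. The obstacle is that these per-prime automorphisms need not be compatible and cannot simply be multiplied into one automorphism of $F_k(G)$. The device that sidesteps this is exactly the observation in the first paragraph — that surjectivity is detected on $G/G'$ and pins $\gcd(a_i)$ down as a unit modulo $\exp(G)$ — which lets the three explicit substitutions above be performed globally over $G$ at once. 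A secondary routine check is that the unipotent substitution $x_1\mapsto x_1(c')^{-1}$ really is invertible in $F_k(G)$, which is where class $2$ (centrality of commutators) is used.
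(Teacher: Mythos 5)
Your proof is correct, and it takes a genuinely different route from the paper at the decisive step. The paper also begins by reducing $w$ to $x_1^a c$ with $c \in [F_k,F_k]$ (citing \cite[Lemma 2.3]{CockeHoChirality}, which is exactly your Nielsen/$GL_k(\mathbb Z)$ reduction), and its coprimality argument is the Frattini-subgroup form of yours: if $p_i \mid a$ then $w(G_{p_i}) \subseteq G_{p_i}^{p_i}[G_{p_i},G_{p_i}] = \Phi(G_{p_i}) \neq G_{p_i}$, contradicting surjectivity on the Sylow factor $G_{p_i}$ --- since $\Phi(G_p) = G_p^p G_p'$, this is the same computation as your test $dA = A$ on $A = G/G'$. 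Where you diverge is the conclusion: the paper invokes Theorem \ref{exhaustive_p} (the $r=0$ alternative in its proof) to get that $w$ is $F_k(G)$-automorphic to a primitive word, thereby leaning on the whole normal-form machinery of Lemma \ref{commutator_products}, Lemma \ref{disjoint_commutators} and Theorem \ref{word_as_chain}; you bypass all of it with two explicit global substitutions, $x_1 \mapsto x_1^{e}$ with $de \equiv 1 \pmod{\exp G}$ and the unipotent move $x_1 \mapsto x_1(c')^{-1}$, whose invertibility you correctly ground in the centrality of $c'$-values in a class-$2$ group (so that $c'$ is fixed by the substitution and $x_1 \mapsto x_1 c'$ inverts it). Your route buys self-containedness and, as you rightly point out, it repairs a genuine wrinkle in the paper's write-up: Theorem \ref{exhaustive_p} is stated for $p$-groups, so applying it to a general finite nilpotent $G$ of class $2$ tacitly requires either gluing per-prime conclusions (via $P_{w,G} = \prod_i P_{w,G_{p_i}}$, which rescues the distribution claim but not immediately the single global automorphism asserted) or rerunning the reduction with exponents coprime to $|G|$ rather than to $p$; your substitutions are performed over $G$ at once, so nothing needs gluing. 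What the paper's route buys in exchange is economy on the page --- it reuses theorems already proved, and those theorems carry structural information (the normal form $v_p$) that serves its other corollaries, whereas your argument, though cleaner here, is special to the surjective case.
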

\begin{proof}
By \cite[Lemma 2.3]{CockeHoChirality}, we assume that $w={x_1}^ac$, where $c \in [F_k,F_k]$. Let $G=G_{p_1} \times G_{p_2} \times \dots \times G_{p_r}$ be the decomposition of $G$ in terms of its Sylow subgroups. Observe that the map induced by $w$ is surjective on $G$ iff it is surjective on each ${G_{p_i}}$. Further, if $p_i$ divides $a$ then $w=({x_1}^{\frac{a}{p_i}})^{p_i}c$ and thus $w(G_{p_i}) \subseteq G^{p_i}[G_{p_i},G_{p_i}]=\Phi(G_{p_i}) \neq G_{p_i}$. Here $\Phi(G_{p_i})$ denotes the Frattini subgroup of $G_{p_i}$.
This contradicts the surjectivity of $w$, and therefore no $p_i$ divides $a$. Now, by Theorem  \ref{exhaustive_p}, $w$ is $F_k(G)$-automorphic to a primitive word, whence $P_{w,G}(g)=\frac{1}{|G|}$ for every $g \in G$.
\end{proof}

We would like to exhibit the relevance of the above corollary from the perspective of the following theorem. 

\begin{theorem}{\bfseries\rm\cite[Theorem 1.1]{Pudur}~~}
Let $w \in F_k$ be a word that is identically distributed on $G$ to a primitive word for all finite groups $G$. Then, $w$ is a primitive word.
\end{theorem}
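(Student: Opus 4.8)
The hypothesis is equivalent to saying that $P_{w,G}$ is the \emph{uniform} distribution on every finite group $G$. Indeed, a primitive word $u$ is by definition part of a free basis, so $u = \varphi(x_1)$ for some $\varphi \in \mathrm{Aut}(F_m)$, and hence by Proposition \ref{aut_eq} it is identically distributed to $x_1$, which visibly induces the uniform distribution. Thus $w$ being identically distributed to a primitive word on each $G$ amounts to $P_{w,G}(g) = |G|^{-1}$ for all $g$ and all finite $G$; such $w$ are called \emph{measure preserving}. The forward implication (primitive $\Rightarrow$ measure preserving) is exactly this automorphism argument, so the plan is to prove the converse: that measure preservation forces primitivity.

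First I would restrict to the symmetric groups $G = S_n$. If $w(\sigma_1,\dots,\sigma_k)$ is uniform on $S_n$ for independent uniform $\sigma_i$, then its expected number of fixed points equals that of a uniform random permutation, namely $1$, for every $n$. Writing $\mathcal{F}_n(w) := \mathbb{E}\left[\mathrm{fix}\,w(\sigma_1,\dots,\sigma_k)\right]$, measure preservation yields $\mathcal{F}_n(w) = 1$ for all $n$. The strategy is to compute $\mathcal{F}_n(w)$ by an independent combinatorial route and to show that this rigidity can hold only when $w$ is primitive.

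The computation is organized by the \emph{primitivity rank} $\pi(w)$, the least rank of a subgroup $J \leq F_k$ containing $w$ as a non-primitive element, with $\pi(w) = \infty$ precisely when $w$ is primitive. The analytic heart of the argument is the asymptotic expansion
\begin{equation*}
\mathcal{F}_n(w) = 1 + \frac{|\mathrm{Crit}(w)|}{n^{\pi(w)-1}} + O\!\left(\frac{1}{n^{\pi(w)}}\right),
\end{equation*}
where $\mathrm{Crit}(w)$ is the finite, nonempty set of rank-$\pi(w)$ subgroups witnessing non-primitivity. Granting this, the constraint $\mathcal{F}_n(w) \equiv 1$ kills the $n^{-(\pi(w)-1)}$ term; since $|\mathrm{Crit}(w)| \geq 1$ whenever $\pi(w) < \infty$, we conclude $\pi(w) = \infty$, i.e.\ $w$ is primitive.

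The hard part is establishing the expansion. I would attach to the cyclic subgroup $\langle w\rangle$, and to each of its finitely many \emph{algebraic extensions} $N \leq F_k$, the associated Stallings core graph, and express $\mathcal{F}_n(w)$ as a sum over these extensions in which $N$ contributes a term of order $n^{1-\mathrm{rk}(N)}$, the exponent being the Euler characteristic of its core graph. The extension $N = \langle w\rangle$ has rank $1$ and supplies the constant $1$; the next contribution comes from the algebraic extensions of minimal rank in which $w$ is imprimitive, namely the critical subgroups of rank $\pi(w)$, giving $|\mathrm{Crit}(w)|\,n^{1-\pi(w)}$. Setting this up rigorously — proving that fixed-point counts are governed by core-graph morphisms, performing the M\"obius inversion over the poset of algebraic extensions that isolates the critical subgroups, and bounding the error — is the crux and the main obstacle, and it constitutes the bulk of Puder's argument.
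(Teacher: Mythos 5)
The statement you were asked to prove is not proved in this paper at all: it is quoted verbatim from the literature as \cite[Theorem 1.1]{Pudur}, imported only to put Corollary \ref{surjective-primitive} in perspective. So there is no internal proof to compare against; the honest comparison is with the source (Puder, and Puder--Parzanchevski for general $k$), and your sketch is a faithful outline of exactly that argument: the reduction of the hypothesis to measure preservation via Proposition \ref{aut_eq}, the restriction to symmetric groups and expected fixed points, the primitivity rank $\pi(w)$, the critical subgroups, and the expansion $\mathbb{E}[\mathrm{fix}\,w] = 1 + |\mathrm{Crit}(w)|\,n^{1-\pi(w)} + O(n^{-\pi(w)})$, from which $\pi(w)=\infty$ and hence primitivity follow. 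One mild attribution point: for arbitrary $k$ (and in fact with uniformity on the symmetric groups alone as hypothesis, which is all you use) the theorem is due to Puder--Parzanchevski; Puder's earlier paper handled $F_2$.

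Judged as a blind proof, however, there is a genuine and (to your credit) acknowledged gap: the asymptotic expansion of $\mathcal{F}_n(w)$ \emph{is} the theorem, and you grant it rather than prove it. Everything you actually verify --- that primitive words are measure preserving, that measure preservation forces $\mathcal{F}_n(w)\equiv 1$, and that the expansion then kills the $n^{1-\pi(w)}$ term since $\mathrm{Crit}(w)\neq\emptyset$ whenever $\pi(w)<\infty$ --- is the routine outer layer. The substance (that $\mathbb{E}[\mathrm{fix}]$ is governed by morphisms of Stallings core graphs, that $\langle w\rangle$ has only finitely many algebraic extensions so that $\mathrm{Crit}(w)$ is finite, and the M\"obius inversion over the relevant poset with error control) occupies the bulk of the source papers and cannot be waved through; nothing in the present paper's toolkit (Nielsen transformations, class-$2$ commutator identities) comes close to supplying it. So your proposal is a correct high-level roadmap of the known proof, but not a proof; within this paper the correct move is simply to cite \cite{Pudur}, which is what the authors do.
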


It is an easy consequence of Lemma \ref{aut_eq} that primitive words induce a surjective word map on every finite group $G$. However, not every surjective word is primitive. The Corollary \ref{surjective_word} may now be repackaged as the following.

\begin{corollary}\label{surjective-primitive}
Let $w \in F_k$ and $G$ be a finite nilpotent group of class $2$. Then, the following are equivalent.
\begin{enumerate}
\item[(i).]  $w$ is surjective on $G$.
\item[(ii).] $w$ is $F_k(G)$-automorphic to a primitive word.
\item[(iii).] $w$ is identically distributed on $G$ to a primitive word.
\end{enumerate}
\end{corollary}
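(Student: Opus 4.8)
The plan is to prove the cyclic chain $(i) \Rightarrow (ii) \Rightarrow (iii) \Rightarrow (i)$, since each individual step is short once Corollary~\ref{surjective_word} is available; the present corollary is essentially a repackaging of that result together with two soft observations about distributions and supports. The implication $(i) \Rightarrow (ii)$ is exactly the content of Corollary~\ref{surjective_word}: if $w$ is surjective on $G$, then that corollary produces an $F_k(G)$-automorphism carrying $w$ to the identity word $x_1$, and $x_1$ is primitive.

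For $(ii) \Rightarrow (iii)$ I would argue that any two $F_k(G)$-automorphic words are identically distributed on $G$. The point to verify is that the automorphisms realizing such an equivalence preserve fiber sizes of word maps, not merely the group structure of $F_k(G)$. This holds for all the automorphisms used to establish $F_k(G)$-equivalences in this paper: those induced by elementary Nielsen transformations are distribution-preserving by Proposition~\ref{aut_eq}, while those induced by substitutions $x_i \mapsto x_i^{\gamma}$ with $\gamma$ coprime to the relevant prime preserve the distribution because $g \mapsto g^{\gamma}$ is then a bijection of $G$ and hence reparametrizes $G^k$ bijectively, fixing the cardinality of each fiber. Composing, the $F_k(G)$-automorphism of Corollary~\ref{surjective_word} carries $P_{w,G}$ to $P_{w_0,G}$, where $w_0$ is the primitive word in question; thus $w$ and $w_0$ share the same distribution, which is $(iii)$.

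For $(iii) \Rightarrow (i)$, suppose $P_{w,G} = P_{w_0,G}$ with $w_0$ primitive. Since $P_{w,G}(g) > 0$ precisely when $g \in w(G)$, equality of the two distributions forces $w(G) = w_0(G)$. A primitive word equals $\varphi(x_1)$ for some $\varphi \in \Aut(F_k)$, so its word map on $G$ is the first-coordinate projection composed with the bijection of $G^k$ induced by $\varphi$; hence $w_0(G) = G$. Therefore $w(G) = G$, i.e.\ $w$ is surjective on $G$, completing the cycle. (One may equally reduce $(i)$ and $(ii)$ to the Sylow components of $G$ as in Corollary~\ref{surjective_word}, but this is already absorbed there.)

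I expect the only delicate point to be $(ii) \Rightarrow (iii)$: one must be careful to read ``$F_k(G)$-automorphic'' through the explicit distribution-preserving substitutions of Remark~\ref{F_k(G)-automorphic} rather than as an arbitrary abstract automorphism of the finite group $F_k(G)$, which need not relate the associated functions $G^k \to G$. Once phrased this way the step is routine, and all of the genuine difficulty is confined to Corollary~\ref{surjective_word} and the structure theorem (Theorem~\ref{exhaustive_p}) that feeds it.
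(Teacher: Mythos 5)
Your cyclic argument $(i) \Rightarrow (ii) \Rightarrow (iii) \Rightarrow (i)$ is exactly how the paper reads this corollary: the paper offers no separate proof, presenting the statement as a repackaging of Corollary \ref{surjective_word} (whose proof already yields $P_{w,G}(g)=|G|^{-1}$ for all $g$ when $w$ is surjective), so your overall route matches. There is, however, one genuine problem in your $(ii) \Rightarrow (iii)$ step, and it stems from a misdiagnosis. You claim that an arbitrary abstract automorphism of the finite group $F_k(G)$ ``need not relate the associated functions $G^k \to G$,'' and you therefore restrict attention to equivalences built from elementary Nielsen transformations and power substitutions $x_i \mapsto x_i^{\gamma}$. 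But statement $(ii)$, per the definition in Remark \ref{F_k(G)-automorphic}, quantifies over \emph{all} automorphisms of the group $F_k(G)$; as written, your argument proves $(ii) \Rightarrow (iii)$ only for the special witnesses you allow, while the cycle needs it for every witness. Your restricted reading silently weakens $(ii)$, so the three conditions are not shown equivalent under the paper's definition.

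The fix is short, and it shows the point you flagged as delicate is not delicate at all: \emph{every} automorphism of $F_k(G)$ preserves distributions. Elements of $F_k(G)$ are themselves maps $G^k \to G$ with pointwise multiplication, and $F_k(G)$ is generated by the images of $x_1, \dots, x_k$; hence any automorphism $\varphi$ is determined by $v_i := \varphi(x_i)$ and acts by substitution, so that $\varphi(w)(g) = w(V(g))$ for all $g \in G^k$, where $V : G^k \to G^k$ is given by $V(g) = (v_1(g), \dots, v_k(g))$. Setting $u_i := \varphi^{-1}(x_i)$ and $U(h) = (u_1(h), \dots, u_k(h))$, the relations $\varphi \circ \varphi^{-1} = \varphi^{-1} \circ \varphi = \mathrm{id}$ say precisely that $U \circ V = V \circ U = \mathrm{id}_{G^k}$, so $V$ is a bijection of $G^k$ and $|\varphi(w)^{-1}(h)| = |w^{-1}(h)|$ for every $h \in G$. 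With this observation, $(ii) \Rightarrow (iii)$ holds for arbitrary $F_k(G)$-automorphisms and your cycle closes correctly; your $(i) \Rightarrow (ii)$ (Corollary \ref{surjective_word}) and $(iii) \Rightarrow (i)$ (a primitive word is identically distributed on $G$ to $x_1$ by Proposition \ref{aut_eq}, so its distribution is uniform and its support is all of $G$, forcing $w(G) = G$) are sound as stated.
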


We note that (i) $\Leftrightarrow$ (iii) is also contained in \cite[Theorem 8]{Cocke-Ho_19}, where the characterization of finite nilpotent groups is provided in terms of probability distributions of word maps. 
The question of equivalence (ii) $\Leftrightarrow$ (iii) is also posed in \cite[Question 2.3]{Amit-Vishne}.
We expect the Corollary \ref{surjective-primitive} to hold for all finite nilpotent groups. 
\bibliographystyle{amsalpha}
\bibliography{word-maps}

\end{document}